\makeatletter \@addtoreset{equation}{section} \makeatother
\newtheorem{theorem}{Theorem}[section]
\newtheorem{proposition}{Proposition}[section]
\newtheorem{lemma}{Lemma}[section]
\newtheorem{remark}{Remark}[section]
\newtheorem{corollary}[theorem]{Corollary}
\newtheorem{teo}{Theorem}[section]
\newcommand{\R}{\mathbb{R}}
\newcommand{\N}{\mathbb{N}}
\newcommand{\norm}[1]{\|#1\|}
\begin{document}

\begin{frontmatter}

\title{On isolated singularities  of  Kirchhoff--type Laplacian problems}

\author[HC]{Huyuan Chen}
\address[HC]{Department of Mathematics, Jiangxi Normal University,\\
Nanchang, Jiangxi 330022, P.R. China}
\ead{chenhuyuan@yeah.net}

\author[MF]{Mouhamed Moustapha Fall}
\address[MF]{African Institute for Mathematical Sciences in  Senegal\\
KM2, Rt de Joal, Mbour. BP 1418. Senegal}
\ead{mouhamed.m.fall@aims-senegal.org}

\author[BZ]{Binlin Zhang }
\address[BZ]{Department of Mathematics, Heilongjiang Institute of Technology,\\
Harbin 150050, P.R. China}
\ead{zhangbinlin2012@163.com}
\cortext[cor1]{Corresponding author}

\begin{abstract}

In this paper, we study isolated singular positive solutions for the following Kirchhoff--type Laplacian problem:
\begin{equation*}
-\left(\theta+\int_{\Omega} |\nabla u| dx\right)\Delta u =u^p   \quad{\rm in}\quad  \Omega\setminus \{0\},\qquad  u=0\quad  {\rm on}\quad  \partial \Omega,
\end{equation*}
where  $p>1$, $\theta\in \R$, $\Omega$ is a bounded smooth domain containing the origin
 in $\R^N$ with $N\ge 2$.
In the subcritical case: $1<p<N/(N-2)$ if $N\ge3$, $1<p<+\infty$ if $N=2$,  we employ the Schauder fixed-point theorem to derive  a sequence of positive isolated singular solutions for the above problem such that $M_\theta(u)>0$. To estimate  $M_\theta(u)$, we make use of the rearrangement argument.
Furthermore, we  obtain a sequence of isolated singular  solutions such that $M_\theta(u)<0$,  by analyzing relationship between the parameter $\lambda$ and the unique solution $u_\lambda$ of $$-\Delta u+\lambda u^p=k\delta_0\quad{\rm in}\quad B_1(0),\qquad  u=0\quad  {\rm on}\quad  \partial  B_1(0).$$
In the supercritical case: $N/(N-2)\le p<(N+2)/(N-2)$ with $N\ge3$, we obtain two isolated singular  solutions $u_i$ with $i=1,2$ such that $M_\theta(u_i)>0$
under some appropriate assumptions.
\end{abstract}

\begin{keyword}
Kirchhoff--type problem \sep Dirac mass \sep Isolated singularity.

\MSC[2010] 35J75, 35B40, 35A01.
\end{keyword}

\end{frontmatter}
\section{Introduction and main results}
A model with small variation of tension due to the changes of the length of a string  is described by  D'Alembert
wave equation, it is also well-known as  the Kirchhoff equation, see \cite{K}, which states as follows
$$m \frac{\partial^2 u}{\partial t^2}-  \left[\tau_0+\frac{\kappa}{2L_0}\int_\alpha^\beta \left|\frac{\partial u}{\partial x}\right|^2 dx\right] \frac{\partial^2 u}{\partial x^2}=0,$$
where $\tau_0$ is the tension, $L_0=\beta-\alpha$ is the length of the string at rest, $m$ is the mass density, $\kappa$ is the Young's modulus. The Kirchhoff--type problems have been
attracted great attentions in the analysis of different nonlinear term due to the gradient term, see \cite{DS,FV, M,VG}.

Observe that in the prototype of Kirchhoff model, the tension, for small deformations
of the string, takes the linear form as follows:
\begin{equation}\label{1.1}
 M(u)=a+b\int_{\Omega}\sqrt{1+ |\nabla u|^2}\, dx,
\end{equation}
where $a>0,\ b>0$. When the displacement gradient is small, i.e. $|\nabla u|\ll 1$, $M(u)\backsim a+b|\Omega|+\frac b2 \int_{\Omega} |\nabla u|^2dx$.  The advantage for this approximation makes the problem have
variational structure and  the approximating solution could be constructed by variational methods.
For example, the stationary analogue   and   qualitative properties of  solutions  to the Kirchhoff--type equation
$$-\left(a+b\int_{\Omega} |\nabla u|^2dx\right)\Delta u+V(x)u=f(x,u)\quad {\rm in}\quad \Omega$$
has been extensively studied in \cite{DS,DP,HL,LPWX,P,PZ} and   extended into the fractional setting in \cite{ MRS, MS, PXZ} and the references therein.
In this case, $M(u)= a+b\int_{\Omega} |\nabla u|^2dx$ is often called Kirchhoff function.
In fact, the Kirchhoff function has been greatly extended for recent years.
For example, the case $a=0, b>0$, which is called degenerate, has been intensely investigated recently,
we refer to \cite{PXZ2} for a physical explanation and \cite{DAS, XZZ} for related results in this direction.

Our interest of this paper is to study a new Kirchhoff--type problem by taking into account that $|\nabla u|$ is not small in a bounded smooth domain $\Omega$ and the tension could be vector in a proper coordinate axis. In this situation, the Kirchhoff function \eqref{1.1} may be
taken as
\begin{equation}\label{kirchhoff}
 M_\theta(u)=\theta+\int_{\Omega} |\nabla u| dx,
\end{equation}
where $\theta$ is assumed to be real number. Given a sequence of extra pressures $\{\sigma_m\}$ with the support in $B_{\frac1m}(0)$ and the total force
$F=\int_{\Omega}\sigma_m dx=1$ keeps invariant. The limit of $\{\sigma_m\}$ as $m\to+\infty$ in the distributional sense is Dirac mass. As we know that the corresponding solutions may blow up at the origin or  blow up in the whole domain. Our aim is to
clarify this limit phenomena of the solutions to some elliptic problems involving the Kirchhoff--type function \eqref{kirchhoff}.

More precisely, in this article we are interested in nonnegative singular  solutions of the following Kirchhoff--type equation:
\begin{equation}\label{eq 1.1}
 \arraycolsep=1pt\left\{
\begin{array}{lll}
 \displaystyle  - M_\theta(u) \Delta u =u^p\quad
 & {\rm in}\quad \Omega\setminus \{0\},\\[2mm]
 \phantom{    M (u)  -\Delta   }
    u=0\quad
  &{\rm on}\quad\partial \Omega,
\end{array}\right.
\end{equation}
where $p>1$,  $M_\theta$ is defined by (\ref{kirchhoff}) with $\theta\in \R$ and $\Omega$ is a bounded, smooth  domain containing the origin in $\R^N$ with $N\ge2$.
The following parameter plays an important role in obtaining the solutions of (\ref{eq 1.1}):
\begin{equation}\label{1.1-2}
  a_p=\sup_{x\in\Omega} \frac{w_1}{w_0},
\end{equation}
 where $w_0=\mathbb{G}_\Omega[\delta_0]$ and $w_1=\mathbb{G}_\Omega[w_0^p]$, $\mathbb{G}_\Omega$ is Green operator defined
as $$\mathbb{G}_\Omega[u](x)=\int_{\Omega} G_\Omega (x,y)u(y) dy,$$
here $G_\Omega$ is the Green kernel of $-\Delta$ in $\Omega\times\Omega$ with zero Dirichlet boundary condition.  Note that
$a_p$ is well-defined when $p$ is subcritical, that is, $p<p^*$, where
 \begin{equation}\label{p*}
 p^*=  \left\{
\begin{array}{lll}
 \displaystyle \frac{N}{N-2}\quad
 & {\rm if}\quad N\ge3,\\[2mm]
 \phantom{    }
 \displaystyle +\infty\quad &{\rm if}\quad N=2.
\end{array}\right.
 \end{equation}

Our first existence result about isolated singular solutions with $M_\theta(u)>0$  is stated as follows.

\begin{teo}\label{teo 0}
 Assume that $N\ge2$, $M_\theta$ is defined by (\ref{kirchhoff}) with $\theta\in \R$, $a_p$ is given by (\ref{1.1-2}), $p^*$ is given by (\ref{p*})
and   $\Omega$ is a bounded smooth domain containing the origin such that
$$  B_1(0)\subset   \Omega\quad{\rm and}\quad |B_{r_0}(0)|=|\Omega|$$
where $1\le r_0<+\infty.$

Let   $k>r_0\theta_- $  with $\theta_-:=\min\{0,\theta\}$ be such that
  \begin{equation}\label{1.1-1}
 \frac{k^{p-1}}{\theta+r_0^{-1}k}\le \frac1{a_p p }\left(\frac{p-1}{p}\right)^{p-1}.
  \end{equation}
Then for $p\in (1,p^*)$, problem (\ref{eq 1.1}) has a nonnegative solution $u_k$ satisfying that
\begin{equation}\label{md}
 M_\theta(u_k)\ge\theta+r_0^{-1}k>0
\end{equation}
and $u_k$ has following asymptotic behaviors at the origin
\begin{equation}\label{1.2}
\lim_{|x|\to0^+} u_k(x)\Phi^{-1} (x)= c_N    k,
\end{equation}
 where $c_N>0$ is the normalized constant and
  $$\Phi (x)=\arraycolsep=1pt\left\{
\begin{array}{lll}
 \displaystyle  |x|^{2-N}\quad
 & {\rm if}\quad N\ge3,\\[2mm]
 \phantom{    }
 \displaystyle -\ln |x|\quad &{\rm if}\quad N=2.
\end{array}\right.
$$

Furthermore, $u_k$ is a distributional solution of
 \begin{equation}\label{eq 1.2}
    \arraycolsep=1pt\left\{
\begin{array}{lll}
 \displaystyle   -\Delta u =\frac{u^p}{ M_\theta(u)}+ k\delta_0\quad
 &{\rm in}\quad \Omega,\\[2mm]
 \phantom{   -\Delta   }
 \displaystyle   u=0\quad
 &{\rm on}\quad\partial \Omega,
\end{array}\right.
\end{equation}
where $\delta_0$ is Dirac mass concentrated at the origin.

\end{teo}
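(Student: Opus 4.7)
I will recast \eqref{eq 1.1} as the integral equation corresponding to \eqref{eq 1.2} and apply the Schauder fixed--point theorem on a suitable order interval. Set $w_0=\mathbb{G}_\Omega[\delta_0]$, so that $-\Delta w_0=\delta_0$, $w_0|_{\partial\Omega}=0$, and $w_0(x)\sim c_N\Phi(x)$ at the origin; set also $w_1=\mathbb{G}_\Omega[w_0^p]$, so $a_p=\sup_\Omega w_1/w_0<\infty$ since $p<p^*$ guarantees $w_0^p\in L^1(\Omega)$. The natural candidate operator is
\[
T(u)\;:=\;k\,w_0+\frac{1}{M_\theta(u)}\,\mathbb{G}_\Omega[u^p],
\]
defined wherever $M_\theta(u)>0$. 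Any fixed point $u_k$ of $T$ is, by elliptic regularity on $\Omega\setminus\{0\}$, simultaneously a distributional solution of \eqref{eq 1.2} and a classical solution of \eqref{eq 1.1}.

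I will apply $T$ on the order interval $\mathcal{O}_\lambda=\{u\in W^{1,1}_0(\Omega):k\,w_0\le u\le \lambda\,w_0\ \text{a.e.}\}$ for an appropriate $\lambda>k$. Invariance $T(\mathcal{O}_\lambda)\subseteq\mathcal{O}_\lambda$ requires two estimates. The lower bound on $M_\theta$ is the analytic heart of the argument and comes from the rearrangement technique: combining the Polya--Szego inequality $\int_\Omega|\nabla u|\,dx\ge\int_{B_{r_0}}|\nabla u^{\#}|\,dx$ with a Talenti-type comparison between $w_0$ and the Green function on the equimeasurable ball $B_{r_0}$ yields $\int_\Omega|\nabla u|\,dx\ge r_0^{-1}k$ whenever $u\ge k w_0$, so $M_\theta(u)\ge\theta+r_0^{-1}k$, which is strictly positive under the hypothesis $k>r_0\theta_-$. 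The upper bound uses that $u\le\lambda w_0$ gives $\mathbb{G}_\Omega[u^p]\le\lambda^p w_1\le\lambda^p a_p w_0$, so invariance reduces to
\[
(\lambda-k)(\theta+r_0^{-1}k)\;\ge\;\lambda^p a_p.
\]
The function $\lambda\mapsto(\lambda-k)/\lambda^p$ attains its maximum $(p-1)^{p-1}/(p^p k^{p-1})$ at $\lambda^{*}=pk/(p-1)$, so this inequality is solvable in $\lambda>k$ precisely under the hypothesis \eqref{1.1-1}.

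Continuity of $T$ on $\mathcal{O}_\lambda$ in a topology strong enough to control $M_\theta$ (such as a weighted Sobolev norm combining $\|u/w_0\|_{L^\infty}$ and $\|\nabla u\|_{L^1}$) and the relative compactness of $T(\mathcal{O}_\lambda)$ are standard: they combine dominated convergence (using the envelope $u^p\le\lambda^p w_0^p$ together with $p<p^*$) with elliptic regularity for the Poisson equation away from the origin. Schauder's theorem then yields $u_k\in\mathcal{O}_\lambda$ with $T(u_k)=u_k$, which satisfies \eqref{md} and solves \eqref{eq 1.2} in the sense of distributions. The representation $u_k=k w_0+\mathbb{G}_\Omega[u_k^p]/M_\theta(u_k)$, combined with the scaling estimate $\mathbb{G}_\Omega[u_k^p](x)=o(\Phi(x))$ as $x\to 0$ (a direct consequence of $p<p^*$, since the convolution of the Riesz kernel $|y|^{2-N}$ with $w_0^p\sim|y|^{p(2-N)}$ is of order $|x|^{2-p(N-2)}$), yields the asymptotic \eqref{1.2}.

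The principal obstacle is the sharp lower bound $\int_\Omega|\nabla u|\,dx\ge r_0^{-1}k$ via symmetrization: the Polya--Szego inequality for the $L^1$--Dirichlet integral must be handled carefully for functions with an unbounded singularity at the origin (via truncation and passage to the limit), and the precise constant $r_0^{-1}$ must be extracted by tracking how the Green function of $\Omega$ rearranges against that of $B_{r_0}$. All remaining ingredients---the elementary optimization in $\lambda$, the Schauder machinery, and the asymptotic expansion---are routine once this key estimate is in hand.
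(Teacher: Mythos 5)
Your proposal is correct and follows essentially the same route as the paper's proof: a Schauder fixed point on an order interval whose upper barrier is a multiple of $w_0$ (the paper takes $kw_0+t_pk^pw_1\le (k+t_pk^pa_p)w_0$, so your elementary optimization of $\lambda\mapsto(\lambda-k)/\lambda^p$ at $\lambda^*=pk/(p-1)$ reproduces exactly the paper's condition \eqref{1.1-1}), with the lower bound \eqref{md} obtained by P\'olya--Szeg\H{o} rearrangement onto the equimeasurable ball and the asymptotics read off from the representation $u_k=kw_0+\mathbb{G}_\Omega[u_k^p]/M_\theta(u_k)$. The one detail worth flagging in the step you call the principal obstacle: the comparison cannot be run directly against the Green function of $B_{r_0}(0)$ (which need not be contained in $\Omega$), and the paper instead compares $\int_\Omega w_0$ with $\int_{B_1(0)}\mathbb{G}_{B_1(0)}[\delta_0]$ via domain monotonicity of the Green kernel on $B_1(0)\subset\Omega$, uses the normalization $\int_{B_1(0)}|\nabla \mathbb{G}_{B_1(0)}[\delta_0]|\,dx=1$ obtained with the test function $1-|x|$, and then rescales from $B_{r_0}(0)$ to $B_1(0)$ to extract the factor $r_0^{-1}$ -- this is the content of the paper's Lemma 3.1 together with the inequality $\int_{B_1(0)}u_n^{*}\,dx\ge k\int_{B_1(0)}\mathbb{G}_{B_1(0)}[\delta_0]\,dx$.
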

\begin{remark}\label{re 1}
{\rm Note that $a_p$ depends on $p$ and $\Omega$ and the value $p=2$ is critical for  assumption (\ref{1.1-1}) for $N=2,3$. Indeed,  $p^*>2$  occurs only for $N=2$ and $N=3$.
Due to the parameter $\theta$, (\ref{1.1-1}) gives a rich structure of isolated singular solutions for problem (\ref{eq 1.1}). Moreover, a discussion is put  in Proposition \ref{pr 2.1} in Section 2. }
\end{remark}

Involving Kirhchoff function $M_\theta(u)$, the classical method of Lions' iteration argument  in \cite{Lions1980} does not work due to the lack of monotonicity of nonlinearity $M_\theta^{-1}(u)u^p$, and also the variational method in \cite{NS} fails, since (\ref{eq 1.1}) has no variational structure. Furthermore, it is difficult to calculate precise value for  $\int_\Omega |\nabla u_k|$ to express $M_\theta(u_k)$, especially, when $\Omega$ is a general bounded domain.  To overcome these difficulties, we make use of the rearrangement argument to estimate the value of $M_\theta(u)$ and employ the Schauder fixed-point theorem to obtain the existence of isolated singular solutions in the class set of $M_\theta(u)>0$.\smallskip

When $\theta<0$, we can  derive a branch of singular solutions such that   $M_\theta(u)<0$.

 \begin{teo}\label{teo 00}
 $(i)$  Let $N\ge2$, $p\in(1,p^*)$, $\theta<0$ and $\Omega=B_1(0)$.
For $k\in(0,-\theta)$,
 problem (\ref{eq 1.1}) has a nonnegative solution $u_k$, which is a distributional solution of (\ref{eq 1.2}) with $\Omega=B_1(0)$, satisfying that
 $$\theta <M_\theta(u_k)<k+\theta<0$$
  and $u_k$   has the asymptotic behavior
 (\ref{1.2}).

$(ii)$  Let $N\ge2$, $p\in(\frac{N+1}{N-1},p^*)$, $\theta<0$ and $\Omega$ is a bounded smooth domain containing the origin. Then
 problem (\ref{eq 1.1}) has a nonnegative solution $u_p$, which is not a distributional solution of (\ref{eq 1.2}), satisfying that
 $$ M_\theta(u_p) <0$$
  and $u_p$   has the asymptotic behavior
 $$\lim_{|x|\to0^+}u_p(x)|x|^{\frac{2}{p-1}}=c_p(-M_\theta(u_p))^{\frac1{p-1}},$$
 where $c_p=\left[\frac2{p-1}\left(\frac2{p-1}+2-N\right)\right]^{\frac1{p-1}}$.
\end{teo}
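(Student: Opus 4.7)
The plan is to handle the two parts by a common fixed-point strategy: construct a one-parameter family of auxiliary solutions, then close a self-consistency equation for $M_\theta$ by intermediate value.

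For part (i), for each $\la\ge 0$ I would consider the absorption problem
\begin{equation*}
-\Delta u + \la u^p = k\delta_0 \text{ in } B_1(0), \qquad u = 0 \text{ on } \partial B_1(0).
\end{equation*}
Classical theory of isolated singularities of semilinear equations with absorption (Brezis--V\'eron, V\'eron) yields for each $\la\ge 0$ a unique nonnegative distributional solution $u_\la$, smooth on $B_1\setminus\{0\}$, with $u_\la(x)\sim c_N k\Phi(x)$ as $|x|\to 0^+$. Monotonicity of $f(u)=u^p$ forces $u_\la$ pointwise nonincreasing in $\la$ (use $u_{\la_2}$ as a subsolution for parameter $\la_1<\la_2$). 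For $\Om=B_1$, a direct radial computation gives $\int_{B_1}|\nabla w_0|\,dx=1$, and the integration-by-parts identity
\begin{equation*}
\int_{B_1}|\nabla u|\,dx=(N-1)\om_{N-1}\int_0^1 u(r)r^{N-2}\,dr\quad\text{for radial decreasing }u\text{ with }u(1)=0,
\end{equation*}
applied to $u_\la<u_0=kw_0$, yields $\int|\nabla u_\la|\,dx<k$ for $\la>0$ and, by dominated convergence ($u_\la\downarrow 0$ a.e.), $\int|\nabla u_\la|\,dx\to 0$ as $\la\to+\infty$. Hence $\Psi(\la):=M_\theta(u_\la)+1/\la$ satisfies $\Psi(0^+)=+\infty$ and $\Psi(+\infty)=\theta<0$, so IVT yields $\la^*>0$ with $M_\theta(u_{\la^*})=-1/\la^*$. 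Setting $u_k:=u_{\la^*}$, the identity $\la^*u_k^p=-u_k^p/M_\theta(u_k)$ turns the auxiliary equation into \eqref{eq 1.2}, which in turn gives \eqref{eq 1.1} on $B_1\setminus\{0\}$; the bounds $\theta<M_\theta(u_k)<k+\theta<0$ follow from $\int|\nabla u_{\la^*}|\in(0,k)$ together with $k<-\theta$, and \eqref{1.2} is inherited from the auxiliary problem.

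For part (ii), I would use the natural scaling invariance. If $U$ solves
\begin{equation*}
\Delta U = U^p \text{ in }\Om\setminus\{0\},\quad U=0\text{ on }\partial\Om,\quad U(x)\sim c_p|x|^{-2/(p-1)}\text{ at }0,
\end{equation*}
then $U_\mu := \mu^{1/(p-1)}U$ solves $\mu\Delta U_\mu=U_\mu^p$ with blow-up rate $c_p\mu^{1/(p-1)}|x|^{-2/(p-1)}$. To produce $U$, I would perform a monotone iteration on the truncated domains $\Om\setminus\overline{B_\va(0)}$, using $\underline u\equiv 0$ as subsolution and $\bar u(x):=c_p|x|^{-2/(p-1)}$ as supersolution (the exact self-similar profile), then pass to the limit $\va\to 0$. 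For the precise matching $U(x)|x|^{2/(p-1)}\to c_p$, the upper bound is immediate from $U\le\bar u$, and the lower bound follows from using $(c_p-\delta)|x|^{-2/(p-1)}$ as a strict subsolution near the origin for any $\delta>0$, combined with a Hopf/barrier comparison in a small ball around $0$. The restriction $p>(N+1)/(N-1)$ is precisely the integrability threshold making $|\nabla U|\in L^1(\Om)$, since $|\nabla U|\lesssim|x|^{-2/(p-1)-1}$ and $2/(p-1)+1<N$ iff $p>(N+1)/(N-1)$; set $C:=\int_\Om|\nabla U|\,dx\in(0,\infty)$. The self-consistency equation
\begin{equation*}
\mu + C\mu^{1/(p-1)} = -\theta
\end{equation*}
is continuous and strictly increasing in $\mu\ge 0$, with value $0$ at $\mu=0$ and $+\infty$ at $\mu=+\infty$, so has a unique root $\mu^*>0$. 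Then $u_p:=U_{\mu^*}$ satisfies $-M_\theta(u_p)=\mu^*$, solves \eqref{eq 1.1} on $\Om\setminus\{0\}$, and has the claimed blow-up rate. Since $p<p^*$ forces $2p/(p-1)>N$, we have $u_p^p\notin L^1_{\mathrm{loc}}(\Om)$ near the origin, so $u_p$ cannot be a distributional solution of \eqref{eq 1.2}.

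The hardest step is the construction in part (ii) of the strong singular profile $U$ with the \emph{exact} asymptotic rate $c_p|x|^{-2/(p-1)}$: the upper bound is free from the supersolution, but matching the lower bound and ruling out weaker singular behaviors of Green-kernel type requires a careful V\'eron--Brezis-style barrier and blow-up analysis. A secondary subtlety in part (i) is the vanishing of $\int|\nabla u_\la|\,dx$ as $\la\to+\infty$, which is handled through the radial integration-by-parts identity above together with the uniform bound $u_\la\le kw_0$.
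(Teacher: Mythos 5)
Your overall strategy coincides with the paper's: in part (i) you parametrize by the absorption coefficient $\la$ in $-\Delta u+\la u^p=k\delta_0$ on $B_1(0)$ and close the self-consistency relation $-M_\theta(u_\la)=1/\la$ by the intermediate value theorem; in part (ii) you exploit the homogeneity of $u\mapsto u^p$ to rescale the strongly singular solution of $-\Delta u+u^p=0$ and solve a scalar equation for the scaling parameter. Part (ii) is essentially the paper's proof: the paper cites the existence of the strongly singular profile rather than reconstructing it and writes the self-consistency equation in the reciprocal variable, but your monotone equation $\mu+C\mu^{1/(p-1)}=-\theta$ is the same computation, and your integrability threshold $p>(N+1)/(N-1)$ and the non-integrability of $u_p^p$ (hence failure to be a distributional solution) match the paper's reasoning.

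The genuine gap is in part (i): your intermediate value argument for $\Psi(\la)=M_\theta(u_\la)+1/\la$ silently assumes that $\la\mapsto M_\theta(u_\la)=\theta+\int_{B_1(0)}|\nabla u_\la|\,dx$ is \emph{continuous}. Monotonicity of $\la\mapsto u_\la$ only gives monotonicity of this map, and a monotone function may jump, so the IVT does not apply without more. The paper devotes the main technical step of its proof to exactly this point: it shows that $u_{\la'',k}+\bigl(\frac{\la''-\la'}{\la_2}\bigr)^{1/p}w_0$ is a supersolution of the $\la'$-problem, whence Kato's inequality gives the quantitative bound $|M_\theta(u_{\la',k})-M_\theta(u_{\la'',k})|\le\bigl(\frac{\la''-\la'}{\la_2}\bigr)^{1/p}k$. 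In your framework the gap is fillable (your radial identity reduces continuity of the gradient integral to continuity of $\la\mapsto u_\la$ in $L^1(r^{N-2}dr)$, which follows from monotonicity, uniqueness and elliptic stability), but it must be stated and proved. A smaller omission in the same step: the claim $u_\la\downarrow 0$ a.e.\ as $\la\to+\infty$, needed for $\Psi(+\infty)=\theta<0$, deserves a line of justification (if $u_\la\ge c>0$ on a ball for all $\la$, then $\la u_\la^p\to+\infty$ there while $0\le u_\la\le kw_0$, contradicting local elliptic estimates). The paper sidesteps both asymptotic endpoints by evaluating its function $F$ at the two explicit finite values $\la_1=-M_\theta^{-1}(kw_0)$ and $\la_2=-M_\theta^{-1}(u_{\la_1,k})$, where the sign change is immediate from $u_{\la,k}\le kw_0$ and monotonicity; this is the more economical route.

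Finally, in part (ii) your from-scratch construction of the profile $U$ is shakier than you suggest: a monotone iteration between $\underline u\equiv 0$ and $c_p|x|^{-2/(p-1)}$ with zero boundary data converges to $0$ unless you prescribe the large boundary values on $\partial B_\va(0)$ before letting $\va\to0$, and the exact lower asymptotic $U(x)|x|^{2/(p-1)}\to c_p$ cannot be extracted from the subsolution $(c_p-\delta)|x|^{-2/(p-1)}$ alone without first excluding the weaker Green-kernel-type singular behaviours; this is precisely V\'eron's classification of isolated singularities, which the paper (reasonably) invokes as a known result rather than reproving.
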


For $M_\theta(u)<0$, problem (\ref{eq 1.1}) could be written  as
 \begin{equation}\label{eq 1.3}
 -\Delta u+ \lambda u^p=0\quad {\rm in}\quad {B_1(0)\setminus\{0\}},\quad u=0\quad{\rm on}\quad \partial B_1(0),
 \end{equation}
where $\lambda=-M_\theta^{-1}(u)>0$.
For $\lambda=1$, the nonlinearity in problem (\ref{eq 1.3}) is an  absorption and Lions showed in \cite{Lions1980} that it is always studied  by considering
the very weak solutions of
 \begin{equation}\label{eq 1.4}
 -\Delta u+ \lambda u^p=k\delta_0\quad {\rm in}\quad {B_1(0)}.
 \end{equation}
V\'eron in \cite{V00} gave a survey on the isolated singularities of (\ref{eq 1.3}), in which $B_1(0)$ is replaced by general bounded domain containing the origin.
With a general Radon measure and a more general absorption nonlinearity $g:\R\to\R$  satisfies the subcritical assumption:
$$\int_1^{+\infty}(g(s)-g(-s))s^{-1-p^*}ds<+\infty,$$
problem (\ref{eq 1.4}) has been studied by Benilan-Br\'{e}zis \cite{BB11}, Br\'{e}zis \cite{B12}, by approximating the measure  by a sequence of regular functions, and find classical solutions which
converges to a weak solution. For this approach to work,   uniform bounds for the sequence of classical solutions are necessary to be established. The uniqueness is then derived by
Kato's inequality. Such a method has been applied to solve equations with
boundary measure data  in   \cite{GV,MV1,MV2,MV4} and other extensions  in   \cite{BV,Hung}.

In the case $\lambda=-M_\theta^{-1}(u)$,  depending on the unknown function $u$,  a different approach has  to be  taken  into account   to study  problem (\ref{eq 1.4}). A branch of solutions such that $M_\theta(u)<0$ are derived from the  observations that the function
$F(\lambda)=-M_\theta^{-1}(u)-\lambda$ is continuous and it has a zero,   because we will find two values  $\lambda_1,\lambda_2$ such that $F(\lambda_1)F(\lambda_2)<0$,  where $v_\lambda$ is the unique solution of problem (\ref{eq 1.4}). This zero   indicates a solution of problem (\ref{eq 1.1}).

For the singularity as $|x|^{-2/(p-1)}$,   the diffusion    and  the nonlinear  terms      play the  predominant  roles in (\ref{eq 1.1}), so we just consider $\lambda u_p$, where $u_p$
is the solution of $-\Delta u+u^p=0$ in $\Omega\setminus\{0\}$. By scaling  $\lambda$ to meet the Kirchhoff function and then
a solution with this type singularity is derived  in Theorem \ref{teo 00}.  This scaling technique could be extended to
obtain solutions in the supercritical case in Theorem \ref{teo 2} in Section 5.

It is worth pointing out that the method of searching solutions with the weak singularities as $\Phi$  in Theorem \ref{teo 0} could be extended into dealing with general nonlinearity $f(u)$ when
$0\le f(u)\le c|u|^p$ with $p\in (1,p^*)$.  This method   to prove Theorem \ref{teo 00} is based   on   the homogeneous property of nonlinearity and when the nonlinearity is not a
power function, it is open but challenging to obtain solutions with    such  isolated singularity.

\smallskip

The rest of this paper is organized as follows. In Section 2, we  introduce the very weak solution of equation (\ref{eq 1.1}) involving Dirac mass and give a discussion of (\ref{1.1-1}).   Section 3 is devoted to show the existence  of a solution to   (\ref{eq 1.1}) with $M_\theta(u)>0$ in Theorem \ref{teo 0}. In Section 4, we search the solutions of (\ref{eq 1.1}) with  $M_\theta(u)<0$   in Theorem \ref{teo 00}.   The supercritical case: $N/(N-2)\le p<(N+2)/(N-2)$ with $N\ge3$, is considered in Section 5, and    we obtain there multiple isolated singular  solutions of (\ref{eq 1.1}) such that $M_\theta(u_i)>0$.


\section{ Preliminary}
\subsection{Kirchhoff-type problem with Dirac mass}
In order to drive  solutions of (\ref{eq 1.1}) with  singularity (\ref{1.2}), it is always transformed  into finding solutions of (\ref{eq 1.2}). A function $u$ is said to be a  super (resp. sub) distributional  solution of (\ref{eq 1.2}),
if $u\in L^1(\Omega)$, $|\nabla u|\in L(\Omega)$, $u^p\in L^1(\Omega,\rho dx)$ and
 \begin{equation}\label{e 1.1}
     \int_{\Omega} \left[   u (-\Delta) \xi-\frac{u^p}{ M_\theta(u)}\xi\right]\, d\mu\ge\ ({\rm resp.}\ \le)\ k\xi(0),\quad \forall\, \xi\in C^{1.1}_0(\Omega),\ \xi\ge0,
\end{equation}
where $\rho(x)={\rm dist}(x,\partial\Omega)$. A function $u$ is  a distributional solution  of (\ref{eq 1.2}) if $u$ is both super and sub distributional solutions of of (\ref{eq 1.2}).

Next we build the connection between the singular solutions of (\ref{eq 1.1}) and the distributional solutions of (\ref{eq 1.2}).

 \begin{teo}\label{teo 1}
Assume that $N\ge2$,  $p >1$  and $u\in L^1(\Omega)$ is a nonnegative classical solution of problem (\ref{eq 1.1}) satisfying that  $M_\theta(u)\not=0$ and $u^p\in L^1(\Omega,\rho dx)$.
 Then  $u$ is a very weak solution of problem (\ref{eq 1.2}) for some $k\ge0$. Furthermore,

{\it Case 1: $M_\theta(u)<0$. }\\
 $(i)$  For $N\ge3$, $p\ge p^*$, problem   (\ref{eq 1.1}) only has zero solution and $\theta<0$.   \\
$(ii)$  For $N\ge2$, $1<p< p^*$, we have that $k>0$ and
\begin{equation}\label{2.6}
 \lim_{|x|\to0^*} u(x) \Phi^{-1}(x)=c_Nk.
\end{equation}

{\it Case 2: $M_\theta(u)>0$. }\\
$(i)$  For $N\ge3$, $p\ge p^*$,   we have that $k=0$ and
$$\lim_{|x|\to0}u(x)|x|^{N-2}=0; $$
$(ii)$ Assume more that $1<p<p^*$.
If $k=0$,  then $u$ is removable at the origin,
 and if $k>0$, then $u$ satisfies  (\ref{1.2}).

\end{teo}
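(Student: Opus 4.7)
The plan is to exploit the observation that once $u$ is a fixed classical solution, the Kirchhoff coefficient $M_\theta(u)$ is merely a nonzero real constant, so dividing (\ref{eq 1.1}) through by $M_\theta(u)$ reduces it to a standard semilinear Laplacian problem with constant coefficients, and I can import directly the classical isolated-singularity theory of Lions \cite{Lions1980} and V\'eron \cite{V00}. Concretely, set $c:=M_\theta(u)\in\R\setminus\{0\}$: if $c>0$, write $\mu:=c^{-1}>0$, so (\ref{eq 1.1}) becomes the source equation $-\Delta u=\mu u^p$; if $c<0$, write $\la:=-c^{-1}>0$, yielding the absorption equation $-\Delta u+\la u^p=0$ on $\Om\setminus\{0\}$.

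For Case 1 ($M_\theta(u)<0$), I invoke the Br\'ezis--V\'eron removability theorem. In the critical/supercritical regime $p\ge p^*$ with $N\ge 3$, every nonnegative solution of the absorption equation in the punctured domain extends as a classical nonnegative solution on all of $\Om$ with zero Dirichlet data. Testing against $u$ itself and integrating by parts yields $\int_\Om|\nabla u|^2+\la\int_\Om u^{p+1}=0$, forcing $u\equiv 0$, whence $M_\theta(u)=\theta$, and the hypothesis $M_\theta(u)<0$ compels $\theta<0$. In the subcritical regime $1<p<p^*$, V\'eron's dichotomy applies: either $u$ is removable (in which case the same testing argument gives $u\equiv 0$ and $k=0$), or $u(x)\sim c_N k\Phi(x)$ with $k>0$, which is (\ref{2.6}).

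For Case 2 ($M_\theta(u)>0$), the equation reads $-\Delta u=\mu u^p$. When $p\ge p^*$ and $N\ge 3$, the weak singularity $u(x)\sim c|x|^{2-N}$ is incompatible with $u^p\in L^1(\Om,\rho\,dx)$, since $|x|^{-p(N-2)}$ fails to be locally integrable whenever $p(N-2)\ge N$; a standard Harnack/capacity argument then upgrades this to $u(x)|x|^{N-2}\to 0$, which in turn kills the Dirac coefficient and gives $k=0$. When $1<p<p^*$, V\'eron's classification again yields either removability (then $u$ extends as a smooth solution of $-\Delta u=\mu u^p$ on $\Om$ and $k=0$) or the weak-singularity asymptotic (\ref{1.2}) with $k>0$.

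The very-weak formulation is then obtained uniformly by applying Green's identity to any $\xi\in C^{1,1}_0(\Om)$ on $\Om\setminus B_\va(0)$ and sending $\va\to 0^+$: the volume integrals converge by dominated convergence thanks to $u\in L^1(\Om)$ and $u^p\in L^1(\Om,\rho\,dx)$, while the spherical boundary terms reduce via the near-origin asymptotics to $k\xi(0)$ (and vanish when $u(x)|x|^{N-2}\to 0$). The main obstacle, handled by citing the classical theory above, is extracting the sharp asymptotic constant $c_N k$ from a merely nonnegative singular classical solution; once this constant is identified, matching $k$ with the coefficient of $\delta_0$ in (\ref{eq 1.2}) is a routine integration-by-parts.
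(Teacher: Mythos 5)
Your proposal is correct in substance and reaches all the stated conclusions, but it organizes the argument in the opposite order from the paper. You start from the classification theorems for the constant-coefficient equations $-\Delta u=\mu u^p$ and $-\Delta u+\la u^p=0$ (Lions, Br\'ezis--V\'eron, V\'eron) to get the asymptotics, and only afterwards recover the distributional identity (\ref{eq 1.2}) by excising $B_\va(0)$ and passing to the limit in Green's identity. The paper does the reverse: it first applies the Br\'ezis--Lions lemma \cite{BL} to the distribution $\xi\mapsto\int_\Om\bigl[u(-\Delta)\xi-M_\theta(u)^{-1}u^p\xi\bigr]dx$, which vanishes on test functions supported away from the origin, to conclude $-\Delta u=M_\theta(u)^{-1}u^p+k\delta_0$ with $k\ge0$; the asymptotics then fall out of the representation $u=k\mathbb{G}_\Om[\delta_0]\pm|M_\theta(u)|^{-1}\mathbb{G}_\Om[u^p]$ --- in Case 1 from the immediate two-sided bound $k\mathbb{G}_\Om[\delta_0]-Ck^p\mathbb{G}_\Om[\mathbb{G}_\Om[\delta_0]^p]\le u\le k\mathbb{G}_\Om[\delta_0]$, and in Case 2 from a self-contained bootstrap based on the Green-operator estimates of Proposition \ref{embedding} rather than a citation. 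The paper's order is slightly more economical: your excision argument needs gradient control of $u$ on the spheres $\partial B_\va(0)$ (obtainable from interior elliptic estimates on dyadic annuli, but an extra step you only gesture at), whereas Br\'ezis--Lions requires nothing beyond $u\in L^1$ and $u^p\in L^1_{loc}(\Om\setminus\{0\})$.

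Two points in your write-up need tightening, though neither is fatal. First, in Case 1(ii) the classification of isolated singularities of $-\Delta u+\la u^p=0$ for $1<p<p^*$ is a trichotomy, not a dichotomy: besides removability and $u\sim c_Nk\Phi$ there is the strong singularity $u\sim c_p\la^{-1/(p-1)}|x|^{-2/(p-1)}$, and these solutions genuinely occur in this paper (they are exactly the ones produced in Theorem \ref{teo 00}(ii), which are \emph{not} distributional solutions of (\ref{eq 1.2})). You must exclude this branch explicitly via the hypothesis $u^p\in L^1(\Om,\rho\,dx)$: since $2p/(p-1)>N$ precisely when $p<p^*$, a strong singularity would force $u^p\notin L^1$ near the origin. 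The paper avoids this issue automatically because its representation formula already yields the upper bound $u\le k\mathbb{G}_\Om[\delta_0]$. Second, in Case 2(i) your argument that $k=0$ (non-integrability of $|x|^{-p(N-2)}$ near an interior point where $\rho$ is bounded below) is exactly the paper's, but the upgrade to $\lim_{|x|\to0}u(x)|x|^{N-2}=0$ via a ``Harnack/capacity argument'' is left vague; to be fair, the paper also defers this step to \cite{Lions1980}, so you are on equal footing there, but a cleaner route is to note that once $k=0$ one has $u=M_\theta(u)^{-1}\mathbb{G}_\Om[u^p]$ with $u^p\in L^1$ near the origin, from which the decay of $|x|^{N-2}u(x)$ follows by splitting the Green potential.
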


In order to prove Theorem \ref{teo 1}, we need the following lemmas.

\begin{lemma}\label{lm 4.1}
Let $\tau\in(0,N)$, then for $x\in B_{1/2}(0)\setminus\{0\}$,
\begin{equation}\label{4.2.6}
\mathbb{G}_\Omega[|\cdot|^{-\tau}](x)\le
\left\{ \arraycolsep=1pt
\begin{array}{lll}
 c_2|x|^{-\tau+2} \quad
 &{\rm if}\quad \tau>2,\\[2mm]
 \displaystyle   -c_2\log (|x|) \quad
 &{\rm if}\quad \tau=2,\\[2mm]
 c_2  \quad
 &{\rm if}\quad \tau<2.
\end{array}
\right.
\end{equation}

For $N\ge 3$, $p\in(1,p^*)$,   there holds
\begin{equation}\label{4.2.1}
\mathbb{G}_\Omega[\mathbb{G}_\Omega^p[\delta_{0}]]\le
\left\{ \arraycolsep=1pt
\begin{array}{lll}
 c_2|x|^{p(2-N)+2} \quad
 &{\rm if}\quad p\in (\frac{2}{N-2},p^*),\\[2mm]
 \displaystyle   -c_2\log (|x|) \quad
 &{\rm if}\quad p=\frac{2}{N-2},\\[2mm]
 c_2  \quad
 &{\rm if}\quad p<\frac{2}{N-2}
\end{array}
\right.
\end{equation}

\end{lemma}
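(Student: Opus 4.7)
The plan is to reduce both estimates to classical Riesz-potential computations via the pointwise bound $G_\Omega(x,y)\leq c|x-y|^{2-N}$ (valid for $N\geq 3$), so that $\mathbb{G}_\Omega[|\cdot|^{-\tau}](x)\leq c\int_\Omega |x-y|^{2-N}|y|^{-\tau}\,dy$; the whole problem thus becomes one of estimating this weighted convolution uniformly for small $|x|$.

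For the first inequality, fix $x\in B_{1/2}(0)\setminus\{0\}$ and split $\Omega$ into three pieces: $A_1=B_{|x|/2}(0)$, $A_2=B_{|x|/2}(x)$, and $A_3=\Omega\setminus(A_1\cup A_2)$. On $A_1$, $|x-y|\geq |x|/2$ gives a contribution of order $|x|^{2-N}\int_{B_{|x|/2}}|y|^{-\tau}\,dy=O(|x|^{2-\tau})$, where $\tau<N$ is used to make the inner integral finite. Symmetrically, on $A_2$ one has $|y|\geq |x|/2$, yielding $O(|x|^{-\tau}\cdot |x|^2)=O(|x|^{2-\tau})$. On $A_3$ both $|y|$ and $|x-y|$ are comparable to $|y|$ and bounded below by $|x|/2$, so passing to polar coordinates around $0$ produces the dominant piece $C\int_{|x|/2}^{\mathrm{diam}(\Omega)} r^{1-\tau}\,dr$, which yields exactly the three regimes: $|x|^{2-\tau}$ when $\tau>2$, $-\log|x|$ when $\tau=2$, and a bounded quantity when $\tau<2$. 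Combining the three pieces gives the claim.

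The second inequality then follows by iteration. Since $\mathbb{G}_\Omega[\delta_0](x)=G_\Omega(x,0)\leq C|x|^{2-N}$, we obtain $\mathbb{G}_\Omega[\delta_0]^p(x)\leq C|x|^{-p(N-2)}$, and applying part 1 with $\tau=p(N-2)$---which belongs to $(0,N)$ precisely because $p\in(1,p^*)$---delivers the three subcases: $p>2/(N-2)$, $p=2/(N-2)$, $p<2/(N-2)$ correspond respectively to $\tau>2$, $\tau=2$, $\tau<2$, and the exponent $2-\tau=p(2-N)+2$ matches the stated bound.

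The only delicate point is the borderline regime $\tau=2$, where the annular integral over $A_3$ produces exactly the logarithmic divergence and the bounds from $A_1,A_2$ must be absorbed into the $-c_2\log|x|$ term via the elementary inequality $|x|^{2-\tau}=1\leq -c\log|x|$ for $|x|\leq 1/2$; all other cases follow immediately from the splitting argument with no new ingredients.
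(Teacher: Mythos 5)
Your proposal is correct and follows essentially the same route as the paper, which simply invokes the bound $G_\Omega(x,y)\le c_N\Phi(x-y)$ and leaves the rest as a ``direct computation''; your three-region splitting $A_1,A_2,A_3$ is the standard way to carry out that computation, and the reduction of the second estimate to the first with $\tau=p(N-2)\in(0,N)$ matches the paper exactly. The only minor caveat is that your kernel bound $|x-y|^{2-N}$ presumes $N\ge3$, whereas for $N=2$ one uses the logarithmic kernel (in which case only the regime $\tau<2$ can occur and the conclusion is immediate).
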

 \begin{proof} We follow the idea of Lemma 2.3 in \cite{CZ}. In fact, from \cite[Propsition 2.1]{BV} it follows that the Green kernel verifies that
$$G_\Omega(x,y)\le c_N \Phi(x-y),$$
By direct computation, we get (\ref{4.2.6}).
Since $\lim_{|x|\to0^+}\mathbb{G}_\Omega[\delta_0](x)\Phi^{-1}(x)\to c_N$,   (\ref{4.2.6}) with $\tau=(2-N)p$ implies (\ref{4.2.1}).
\hfill$\Box$
 \end{proof}

\begin{proposition}\label{embedding}(\cite{St} or \cite[Propostion 5.1]{CFY})
Let $h\in L^s(\Omega)$ with $s\ge1$, then
there exists $c_{3}>0$ such that

\noindent$(i)$
\begin{equation}\label{a 4.1}
\|\mathbb{G}_\Omega[h]\|_{L^\infty(\Omega)}  \le c_{3}\|h\|_{L^s(\Omega)}\quad{\rm if}\quad \frac1s<\frac{2 }N;
\end{equation}

\noindent$(ii)$
\begin{equation}\label{a 4.2}
\|\mathbb{G}_\Omega[h]\|_{L^r(\Omega)}\le c_{3}\|h\|_{L^s(\Omega)}\quad{\rm if}\quad \frac1s\le \frac1r+\frac{2}N\quad
{\rm and}\quad s>1;
\end{equation}

\noindent$(iii)$
\begin{equation}\label{a 4.02}
\|\mathbb{G}_\Omega[h]\|_{L^r(\Omega)}\le c_{3}\|h\|_{L^1(\Omega)}\quad{\rm if}\quad 1<\frac1r+\frac{2}N.
\end{equation}
\end{proposition}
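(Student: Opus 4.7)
The plan is to reduce all three inequalities to standard Riesz-potential estimates by exploiting the pointwise kernel bound $G_\Omega(x,y)\le c_N\Phi(x-y)$ already invoked in the proof of Lemma \ref{lm 4.1}. Since $\Omega$ is bounded, for $N\ge 3$ we have $\Phi(x-y)\lesssim|x-y|^{2-N}$, and for $N=2$ the logarithm is dominated by $|x-y|^{-\varepsilon}$ on $\Omega\times\Omega$ for every $\varepsilon>0$. Hence
\[
|\mathbb{G}_\Omega[h](x)|\le c\int_{\Omega}|x-y|^{2-N}|h(y)|\,dy=c\,I_2(|h|\chi_\Omega)(x),
\]
where $I_2$ is the usual Riesz potential of order $2$. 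Each of (i)--(iii) now becomes an $L^s\to L^r$ mapping property of $I_2$ on a bounded set.

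For part (i), H\"older's inequality gives $|\mathbb{G}_\Omega[h](x)|\le\|h\|_{L^s(\Omega)}\bigl\||x-\cdot|^{2-N}\bigr\|_{L^{s'}(\Omega)}$; a polar-coordinate computation shows that the second factor is bounded uniformly in $x\in\Omega$ iff $(N-2)s'<N$, which is the condition $1/s<2/N$. Part (ii) is the classical Hardy--Littlewood--Sobolev inequality for $I_2$ on $\R^N$: for $1<s<N/2$ one has $\|I_2 f\|_{L^{r_0}(\R^N)}\le c\|f\|_{L^s(\R^N)}$ with $1/r_0=1/s-2/N$, and since $|\Omega|<\infty$, H\"older upgrades this to every $r$ with $1/s\le 1/r+2/N$. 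This is exactly the setting of \cite{St} (and of the cited \cite[Proposition 5.1]{CFY}), so one really only needs to invoke it. Part (iii) is the endpoint $s=1$, where only the weak-type bound $I_2:L^1\to L^{N/(N-2),\infty}$ is available; combined with $|\Omega|<\infty$ and $r<N/(N-2)$ (equivalently $1<1/r+2/N$), this yields the stated strong $L^r$ estimate through the Kolmogorov-type inequality $\|f\|_{L^r(\Omega)}\le c(r,|\Omega|)\|f\|_{L^{N/(N-2),\infty}(\Omega)}$.

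The main obstacle, and the only nontrivial point, is the endpoint (iii): there is no strong $L^1\to L^{N/(N-2)}$ bound, so one must pass through weak-type and then lose a bit of integrability. A self-contained alternative, which avoids any Lorentz-space machinery, is to apply Young's convolution inequality to $|x-\cdot|^{2-N}\chi_\Omega * h$, noting that $|\cdot|^{2-N}\chi_{B_R}\in L^q(\R^N)$ for every $q<N/(N-2)$ and any fixed $R$ with $\Omega-\Omega\subset B_R$; choosing $q$ so that $1+1/r=1/q+1$ and using $1<1/r+2/N$ recovers the result with an elementary proof that matches the bound stated in the proposition.
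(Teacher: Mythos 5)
The paper does not actually prove this proposition: it is quoted verbatim from \cite{St} and \cite[Proposition 5.1]{CFY}, so there is no internal argument to compare against. Your proof is correct and is essentially the standard argument behind those references: reduce to the Riesz potential $I_2$ via $G_\Omega(x,y)\le c_N\Phi(x-y)$, then use H\"older with the kernel in $L^{s'}$ for (i) (the finiteness condition $(N-2)s'<N$ is exactly $1/s<2/N$), Hardy--Littlewood--Sobolev plus H\"older on the bounded domain for (ii), and either the weak-type $L^1\to L^{N/(N-2),\infty}$ bound with the finite-measure embedding, or, more simply, Young's inequality with $|\cdot|^{2-N}\chi_{B_R}\in L^r$ for $r<N/(N-2)$, for (iii); the last condition is precisely $1<1/r+2/N$. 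Two small points worth making explicit if you write this up: (a) in (ii) the hypothesis $1/s\le 1/r+2/N$ with $s>1$ also admits the regime $s\ge N/2$, where HLS does not apply directly at the endpoint $s=N/2$; this is covered either by (i) (for $1/s<2/N$) or by first lowering $s$ slightly using $|\Omega|<\infty$ and then applying HLS, and (b) for $N=2$ the bound $G_\Omega(x,y)\le c_N\Phi(x-y)$ should be read as $G_\Omega(x,y)\le C\bigl(1+\bigl|\ln|x-y|\bigr|\bigr)$ on a bounded domain, after which your domination by $|x-y|^{-\varepsilon}$ goes through unchanged. Neither point is a gap in substance; the proposal is a complete and more self-contained treatment than the paper's bare citation.
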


\noindent{\bf Proof of Theorem \ref{teo 1}.}  For $M_\theta(u)\not=0$, we rewrite (\ref{eq 1.1}) as
 \begin{equation}\label{eq 1.1.1}
 \arraycolsep=1pt\left\{
\begin{array}{lll}
 \displaystyle     -\Delta  u =\frac{u^p}{M_\theta(u)}\quad
 & {\rm in}\quad \Omega\setminus \{0\},\\[2mm]
 \phantom{   -\Delta   }
    u=0\quad
  &{\rm on}\quad\partial \Omega.
\end{array}\right.
 \end{equation}

 Since  $u^p\in L^1(\Omega,\rho dx)$ and $u\in L^1(\Omega)$, we may define the operator $L$  by the following
\begin{equation}
L(\xi):=\int_{\Omega} \left[u(-\Delta) \xi  - \frac{u^p}{M_\theta(u)}\xi\right]\,dx,\quad \forall\xi\in C^\infty_c(\R^N).
\end{equation}
First we claim that for any $\xi\in C^\infty_c(\Omega)$ with the support in $\Omega\setminus\{0\}$,
$$L(\xi)=0.$$
In fact, since $\xi\in C^\infty_c(\Omega)$ has the support in $\Omega\setminus\{0\}$, then there exists $r\in(0,1)$ such that $\xi=0$ in $B_r(0)$ and  then
\begin{eqnarray*}
L(\xi) =\int_{\Omega\setminus B_r(0)}\left[u(-\Delta) \xi  - \frac{u^p}{M_\theta(u)}\xi\right]\,dx
  = \int_{\Omega\setminus B_r(0)}\left(-\Delta u  -\frac{u^p}{M_\theta(u)} \right)\xi\,dx
=0.
\end{eqnarray*}
From Theorem 1.1 in \cite{BL},    it implies that
\begin{equation}\label{S}
 L=  k \delta_0\quad {\rm for\ some\ }\ k\ge0,
\end{equation}
 that is,
\begin{equation}
L(\xi)= \int_{\Omega} \left[u(-\Delta ) \xi - \frac{u^p}{M_\theta(u)} \xi\right]\,dx=  k \xi(0),\quad  \quad \forall \xi\in C^\infty_c (\R^N).
\end{equation}
Then $u$ is a weak solution of (\ref{eq 1.2}) for some $k\ge0$.

{\it Case 1: $M_\theta(u)<0$.} We observe that
$$
u=k\mathbb{G}_\Omega[\delta_0] -\frac1{-M_\theta(u)}\mathbb{G}_\Omega[u^q]\le k\mathbb{G}_\Omega[\delta_0],
$$
then
$$
k\mathbb{G}_\Omega [\delta_0]-\frac{k^p}{-M_\theta(u)}\mathbb{G}_\Omega[\mathbb{G}_\Omega [\delta_0]^p]\le u\le k\mathbb{G}_\Omega [\delta_0]\quad{\rm in}\ \ \Omega\setminus\{0\}.
$$
So  if $k=0$, we obtain that $u\equiv 0$, which implies $M_\theta(u)=\theta<0$;
and if $k>0$
$$
\lim_{|x|\to0^+} u(x) \Phi^{-1}(x)=c_Nk.
$$

  We prove that  $k=0$ if $p\ge p^*$ with $N\ge 3$.  By contradiction, if $k>0$, then
 $$u\ge (k/2) \Phi \quad {\rm in}\quad B_{r_0}(0)\setminus\{0\},$$
which implies that
$$u^p(x)\ge  (k/2)^p |x|^{(2-N)p},\quad \forall x\in B_{r_0}(0)\setminus\{0\},$$
where  $(2-N)p\le -N$   and $r_0>0$ is such that $B_{2r_0}(0)\subset \Omega$. A contradiction is obtained that $u^p\not\in L^1(\Omega)$.
Therefore, when $p\ge p^*$, there is no nontrivial nonnegative solution (\ref{eq 1.1}) such that $M_\theta(u)<0$.

 {\it Case 2: $M_\theta(u)>0$.} We refer to \cite{Lions1980} for the proof. For the reader's convenience, we give the details.
 When $p\in (1,N/(N-2))$ and $k=0$,  then
$$u= \frac1{M_\theta(u)}\mathbb{G}_\Omega[u^p ]. $$
We infer from $u^p\in L^{t_0}(\Omega)$ with $t_0=\frac{1}{2}(1+\frac1p\frac{N}{N-2})>1$ and Proposition \ref{embedding} that
$u\in L^{t_1 p}(\Omega)$ and  $u^p\in L^{t_1}(\Omega)$ with
$$t_1=\frac1p\frac{N}{N-2 t_0}t_0>t_0.$$
If $t_1>Np/2$, by Proposition \ref{embedding},
$u\in L^{\infty}(\Omega )$ and then it could be improved that $u$ is a classical solution of
\begin{equation}\label{1.3}
   -\Delta  u =\frac1{ M_\theta(u)}u^p\quad {\rm in}\quad \Omega.
\end{equation}

 If $t_1<Np/2$, we proceed as above.
By Proposition \ref{embedding}, $u\in L^{t_2p}(\Omega)$, where
$$t_2=\frac1p\frac{Nt_1}{N-2 t_1 }>\frac1p\frac{N}{N-2 t_0 }t_1=\left(\frac1p\frac{N}{N-2 t_0}\right)^2t_0.$$
Inductively, let us define
$$t_m=\frac1p\frac{Nt_{m-1}}{N-2 t_{m-1}} >\left(\frac1p\frac{N}{N-2 t_0}\right)^m t_0\to+\infty\quad{\rm as}\quad m\to+\infty.$$
Then there exists $m_0\in\N$ such that
$$t_{m_0}>\frac{1}{2}Np$$
and by part $(i)$ in Proposition \ref{embedding},
$$u\in L^\infty(\Omega).$$
 It then follows that  $u$ is a classical solution of (\ref{1.3}).

 When $p\in (1, N/(N-2))$ and $k\not=0$, we observe that
$$ \lim_{x\to0} \mathbb{G}_\Omega[\delta_0](x)|x|^{N-2} =c_{N,\alpha}$$
and
\begin{equation}\label{13.2.1a}
u=\frac1{M_\theta(u)} \mathbb{G}_\Omega[u^p]+k\mathbb{G}_\Omega[\delta_0].
\end{equation}
 We let
$$u_1= \frac1{M_\theta(u)} \mathbb{G}_\Omega[u^p ]\quad {\rm and}\quad  \Gamma_0=k\mathbb{G}_\Omega[\delta_0].$$
Then by  Young's inequality,
\begin{equation}\label{13.2.1b}
u^p\le 2^p\left(u_1^p+  \Gamma_0^p\right).
\end{equation}
 By the definition of $u_1$ and (\ref{13.2.1b}), we obtain
\begin{equation}\label{13.2.1c}
 u_1 \le 2^p \mathbb{G}_\Omega[u_1^p]+ \Gamma_1,
\end{equation}
where $u_1\in L^s(\Omega)$  for any  $s\in (1,  N/(N-2))$ and
$$ \Gamma_1=2^p \mathbb{G}_\Omega[\Gamma_0^p].$$
Denoting  $\mu_1=2+(2-N)p$, then for $0<|x|< 1/2$,
$$\Gamma_1(x)  \le
\left\{ \arraycolsep=1pt
\begin{array}{lll}
  c_{1}|x|^{\mu_1 } \quad
 &{\rm if}\quad \mu_1<0,\\[2mm]
 \displaystyle   -c_{1}\log |x| \quad
 &{\rm if}\quad \mu_1=0,\\[2mm]
 c_{1} \quad
 &{\rm if}\quad \mu_1>0.
\end{array}
\right. $$
If $\mu_1\le 0$,  letting
$$u_2= 2^p \mathbb{G}_\Omega[ u_1^p], $$
then $u_2\in L^s(\Omega)$ with $s\in[1,\frac{N}{N-2})$, $u_1\le u_2+\Gamma_1$ and
$$u_2\le 2^p\left(\mathbb{G}_\Omega[ u_2^p] + \mathbb{G}_\Omega[ \Gamma_1^p]\right).$$
Let $\mu_2=\mu_1 p +2$, then $\mu_2>\mu_1$ and for $0<|x|<\frac12$,
$$\Gamma_2(x):=2^p \mathbb{G}_\Omega[ \Gamma_1^p](x)\le
\left\{ \arraycolsep=1pt
\begin{array}{lll}
  c_{2}|x|^{\mu_2 } \quad
 &{\rm if}\quad \mu_2<0,\\[2mm]
 \displaystyle   -c_{2}\log |x| \quad
 &{\rm if}\quad \mu_2=0,\\[2mm]
 c_{2}  \quad
 &{\rm if}\quad \mu_2>0.
\end{array}
\right. $$

Inductively, we assume that
$$u_{n-1}\le 2^p \mathbb{G}_\Omega[ u_{n-1}^p]+2^p \mathbb{G}_\Omega[ \Gamma_{n-2}^p], $$
where $u_{n-1}\in L^s(\Omega)$ for $s\in[1,N/(N-2))$, $\Gamma_{n-2}(x)\le |x|^{\mu_{n-2}}$ for  $\mu_{n-2}<0$.

Let
$$u_n= 2^p \mathbb{G}_\Omega[ u_{n-1}^p],\quad \quad \Gamma_{n-1}=2^p\mathbb{G}_\Omega[ \Gamma_{n-2}^p], $$
and
 $$\mu_{n-1}=\mu_{n-2} p +2.$$
Then $u_n\in L^s(\Omega)$ for $s\in[1, N/(N-2))$ and  for $0<|x|<1/2$,
$$\Gamma_{n-1}(x):= \mathbb{G}_\Omega[  \Gamma_{n-2}^p](x)\le
\left\{ \arraycolsep=1pt
\begin{array}{lll}
  c_n|x|^{\mu_{n-1} } \quad
 &{\rm if}\quad \mu_{n-1}<0,\\[2mm]
 \displaystyle   -c_n\log |x| \quad
 &{\rm if}\quad \mu_{n-1}=0,\\[2mm]
 c_n  \quad
 &{\rm if}\quad \mu_{n-1}>0.
\end{array}
\right. $$
We observe that
\begin{eqnarray*}
 \mu_{n-1}-\mu_{n-2}= p(\mu_{n-2}-\mu_{n-3})&=&p^{n-3}(\mu_2-\mu_1)  \\
   &\to&+\infty\quad{\rm as}\quad n\to+\infty.
\end{eqnarray*}
Then there exists $n_2\ge 1$ such that
 $$ \mu_{n_2-1}>0\quad{\rm and}\quad \mu_{n_2-2}\le0$$
and
\begin{equation}\label{3.2.1d1}
u\le u_{n_2}+\sum^{n_2-1}_{i=1} \Gamma_i +\Gamma_0,
\end{equation}
where $\Gamma_i\le c|x|^{\mu_i}$ and
$$u_{n_2}\le 2^p (\mathbb{G}_\Omega[ u_{n_2}^p]+1).$$

 Next, we claim that $u_{n_2}\in L^\infty(\Omega)$. Since $u_{n_2}\in L^s(\Omega)$ for  $s\in[1, N/(N-2))$,
letting
$$t_0=\frac12\left(1+\frac1p\frac{N}{N-2}\right)\in \left(1,\frac{N}{N-2}\right),$$
 then $\frac1p\frac{N}{N-2 t_0 }>1$ and
by Proposition \ref{embedding}, we have that
$u_{n_2}\in L^{t_1}(\Omega)$ with
$$t_1=\frac1p\frac{Nt_0}{N-2 t_0 }.$$
Inductively, it implies by $u_{n_2}\in L^{t_{n-1}}(\Omega)$ that
$u_{n_2}\in L^{t_{n}}(\Omega)$
with
$$t_n=\frac1p\frac{Nt_{n-1}}{N-2 t_{n-1}} >\left(\frac1p\frac{N}{N-2 t_0}\right)^n t_0\to+\infty\quad{\rm as}\quad n\to\infty.$$
Then there exists $n_3\in\N$ such that
$$s_{n_3}>\frac{Np}{2}$$
and by part $(i)$ in Proposition \ref{embedding}, it infers that
$$u_{n_2}\in L^\infty(\Omega).$$

Therefore, it implies by $u\ge \Gamma_0 $ and (\ref{3.2.1d1}) that
$$\lim_{x\to0}u(x)|x|^{N-2}=c_{N,\alpha}k.$$
This ends the proof.
 \hfill$\Box$

\subsection{Discussion on (\ref{1.1-1})  }

The following two functions plays an important role in  searching distributional solutions  of problem (\ref{eq 1.2})
\begin{equation}\label{3.1}
 w_0=\mathbb{G}_\Omega [\delta_0],\qquad w_1=\mathbb{G}_\Omega [w_0^p],
\end{equation}
which are the solutions respectively of
\begin{equation}\label{eq 3.1}
    \arraycolsep=1pt\left\{
\begin{array}{lll}
 \displaystyle    -\Delta  u = \delta_0\quad
 &{\rm in}\quad \Omega,\\[2mm]
 \phantom{   -\Delta    }
 \displaystyle   u=0\quad
 &{\rm on}\quad\partial \Omega
\end{array}\right.
\end{equation}
and
\begin{equation}\label{eq 3.2}
    \arraycolsep=1pt\left\{
\begin{array}{lll}
 \displaystyle    -\Delta  u =w_0^p \quad
 &{\rm in}\quad \Omega,\\[2mm]
 \phantom{    -\Delta    }
 \displaystyle   u=0\quad
 &{\rm on}\quad\partial \Omega.
\end{array}\right.
\end{equation}


Observe that  $a_p>0$ defined in (\ref{1.1-2})  is the smallest constant with $p\in(1,\frac{N}{N-2})$  such that
\begin{equation}\label{2.1}
 w_1\le a_pw_0\quad{\rm in}\ \ \Omega\setminus\{0\}.
\end{equation}
Obviously, $a_p$ depends on the domain $\Omega$.

\begin{proposition}\label{pr 2.1}
Let  $\Omega=B_1(0)$.

$(i)$  If $\theta>0$ and  $1<p<\min\{2,p^*\}$, there exists $a_p^*>0$ depending $\theta$  such that
when $0<a_p\le a_p^*$, (\ref{1.1-1}) holds for any $k>0$; and  when $a_p> a_p^*$, (\ref{1.1-1}) holds
for $0<k\le k_1$ and $k_2\le k<+\infty$, where  $0<k_1<k_2<+\infty$.

  If $\theta>0$, $p^*>2$ and  $2<p< p^*$, there exists $k_3>0$ such that for $0<k\le k_3$, (\ref{1.1-1}) holds.

   If $\theta>0$, $p^*>2$ and  $p=2$, then  when  $ a_2>\frac14,$ (\ref{1.1-1}) holds for $0<k<\frac{\theta}{4a_2-1}$; and when
   $ a_2\le\frac14,$   (\ref{1.1-1}) holds for any $k>0$.\smallskip

$(ii)$  If $\theta=0$ and  $1<p<\min\{2,p^*\}$,  then
  (\ref{1.1-1}) is equivalent to
  $$k\ge  \left(\frac{(p-1)^{p-1}}{p^pa_p}\right)^{-\frac1{2-p}};$$

  If $\theta=0$, $p^*>2$ and  $2<p< p^*$,  then
  (\ref{1.1-1}) is equivalent to
  $$0<k\le  \left(\frac{(p-1)^{p-1}}{p^pa_p}\right)^{\frac1{p-2}}.$$

  If $\theta=0$, $p^*>2$ and  $p=2$, then  when  $ a_2>\frac14,$ there is no $k>0$ such that (\ref{1.1-1}) holds; and when
   $ a_2\le\frac14,$   (\ref{1.1-1}) holds for any $k>0$.\smallskip

$(iii)$  If $\theta<0$ and  $1<p<\min\{2,p^*\}$,  then
  (\ref{1.1-1}) holds for $k\ge k_4$, where
  $$k_4>  \left(\frac{(p-1)^{p-1}}{p^pa_p}\right)^{-\frac1{2-p}};$$

  If $\theta<0$, $p^*>2$ and  $2<p< p^*$,  then   $a_p^{**}=(-\theta)^{2-p}p^{-p}(p-1)(p-2)^{p-3}$  such that
when $0<a_p\le a_p^{**}$, (\ref{1.1-1}) holds for $k_5\le k\le k_6$, where $0<k_5\le\frac{p-1}{2-p}\theta\le  k_6<+\infty$; and  when $a_p> a_p^*$, there is no $k>0$ such that (\ref{1.1-1}) holds.

   If $\theta<0$, $p^*>2$ and  $p=2$, then  when  $ a_2<\frac14,$ (\ref{1.1-1}) holds for $0<k<\frac{\theta}{4a_2-1}$; and when
   $ a_2\ge\frac14,$   (\ref{1.1-1}) holds for any $k>0$.

\end{proposition}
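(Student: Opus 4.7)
The plan is to reduce Proposition \ref{pr 2.1} to a single-variable calculus exercise. Since $\Omega = B_1(0)$ satisfies $|B_{r_0}(0)| = |\Omega|$ with $r_0 = 1$, the inequality (\ref{1.1-1}) takes the form $g(k) \le C$, where
\begin{equation*}
g(k) := \frac{k^{p-1}}{\theta + k}, \qquad C := \frac{1}{a_p p}\left(\frac{p-1}{p}\right)^{p-1},
\end{equation*}
and the admissible set is $k > \max\{0, -\theta\}$. A direct differentiation gives
\begin{equation*}
g'(k) = \frac{k^{p-2}\bigl[(p-1)\theta + (p-2)k\bigr]}{(\theta + k)^{2}},
\end{equation*}
so that (for $p \ne 2$) the only interior critical point of $g$ is $k^{*} = \frac{(p-1)\theta}{2-p}$. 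Its sign and whether it lies in the admissible set are dictated by $\mathrm{sgn}\,\theta$ and by whether $p < 2$ or $p > 2$. Each assertion of the proposition will follow by combining the resulting monotonicity picture with the intermediate value theorem and an explicit evaluation of $g(k^{*})$ (or of $g(k) = k/(\theta + k)$ when $p = 2$).

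For part (i), $\theta > 0$, the domain is $(0, \infty)$ with $g(0^+) = 0$. In the subcritical range $p < 2$, $g$ rises to a unique maximum $g(k^{*}) = \frac{(p-1)^{p-1}\theta^{p-2}}{(2-p)^{p-2}}$ and then decays to $0$; solving $g(k^{*}) = C$ for $a_p$ yields the threshold $a_p^{*} = \frac{(2-p)^{p-2}\theta^{2-p}}{p^{p}}$, and the two regimes stated correspond to $g(k^{*}) \le C$ (so $g \le C$ everywhere) versus $g(k^{*}) > C$ (so $\{g \le C\}$ is a disjoint union $(0, k_1] \cup [k_2, \infty)$ determined by the two roots of $g = C$). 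For $p > 2$, $g$ is strictly increasing from $0$ to $+\infty$, giving the single threshold $k_3$; the case $p = 2$ reduces to the linear inequality $(4a_2 - 1)k \le \theta$, from which the two subcases in $a_2$ fall out. Part (ii), $\theta = 0$, is simplest because $g(k) = k^{p-2}$: the three subcases follow by solving $k^{p-2} = C$ directly.

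Part (iii), $\theta < 0$, is carried out on $(-\theta, \infty)$ where $g$ has a vertical asymptote at $-\theta$. For $p < 2$, $g$ decreases strictly from $+\infty$ to $0$, yielding a unique threshold $k_4$; the lower bound $k_4 > C^{-1/(2-p)}$ is just the observation that $g(k) > k^{p-2}$ on the admissible set (since $\theta + k < k$), so its level set at $C$ lies strictly to the right of that of $k^{p-2}$. For $p > 2$ the graph is U-shaped, with a positive minimum at $k^{*} = \frac{(p-1)\theta}{2-p} > -\theta$; comparing $g(k^{*})$ with $C$ produces the critical value $a_p^{**}$, and whenever $g(k^{*}) \le C$ the intermediate value theorem returns a bounded interval $[k_5, k_6]$ containing $k^{*}$ on which (\ref{1.1-1}) holds, while $g(k^{*}) > C$ forces the level set to be empty. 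The $p = 2$ case again reduces to a linear computation on $(-\theta, \infty)$. The main obstacle is not conceptual but notational: one must track signs carefully in quantities such as $(2-p)^{p-2}$ and $\theta^{p-2}$ with non-integer exponents, verify in each subcase that $k^{*}$ actually lies in the admissible set where it is used, and match the resulting thresholds precisely against the closed-form expressions for $a_p^{*}$, $a_p^{**}$, $k_1, \dots, k_6$ stated in the proposition.
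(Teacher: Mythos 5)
Your proposal is correct and follows essentially the same route as the paper: the paper likewise takes $r_0=1$, studies $h(k)=\frac{k^{p-1}}{\theta+k}-\frac1{a_pp}\left(\frac{p-1}{p}\right)^{p-1}$ on $(\theta_-,+\infty)$, computes $h'$ and the critical point $k_0=\frac{p-1}{2-p}\theta$, and then declares the remaining case analysis simple and omits it. Your write-up in fact supplies more of that omitted analysis (the explicit value of $g(k^{*})$, the resulting thresholds, and the comparison $g(k)>k^{p-2}$ for $\theta<0$) than the paper itself does.
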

 \begin{proof} When $\Omega=B_1(0)$, we have that $r_0=1$. Let
$$h(k)=\frac{k^{p-1}}{\theta+k}-\frac1{a_pp}\left(\frac{p-1}{p}\right)^{p-1},\quad k\in(\theta_-,+\infty).$$
Note that
$$h'(k)= \frac{(p-1)k^{p-2}(\theta+k)-k^{p-1}}{(\theta+k)^2}.$$
When $p\not=2$, $h'(k_0)=0$ implies that
$$k_0=\frac{p-1}{2-p}\theta.$$

When $p=2$,
$$h(k)=\frac{k}{\theta+k}-\frac1{4a_2},\quad k\in(0,+\infty).$$
 The rest of the proof is simple and hence we omit it.\hfill$\Box$ \medskip
 \end{proof}

When $p=2$, note that $\frac14$ is a critical value for (\ref{1.1-1}) and we show that $a_2<\frac14$ when $\Omega$ is a ball.

\begin{lemma}\label{lm 2.1}
Assume that $\Omega=B_1(0)$, $N=2$ or $3$,  $p=2$ and   $a_2$ is given by  (\ref{2.1}). Then
$$\alpha_2<\frac14.$$
\end{lemma}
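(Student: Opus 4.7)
The plan is to exploit the radial symmetry of $\Omega = B_1(0)$. Since $\delta_0$ and $w_0^2$ are both rotationally invariant, $w_0$ and $w_1 = \mathbb{G}_{B_1}[w_0^2]$ depend only on $r = |x|$, and problems (\ref{eq 3.1})--(\ref{eq 3.2}) reduce to one-dimensional ODEs. The formula for $w_0$ is standard: $w_0(r) = \frac{1}{4\pi}(r^{-1} - 1)$ if $N = 3$ and $w_0(r) = -\frac{1}{2\pi}\log r$ if $N = 2$. Integrating the radial equation $-r^{-(N-1)}(r^{N-1} w_1')' = w_0^2$ twice, with $w_1(1) = 0$ and the regularity condition $r^{N-1} w_1'(r) \to 0$ as $r \to 0^+$, yields the closed-form expressions
\begin{equation*}
w_1(r) = \frac{1}{16\pi^2}\left[-\log r - \tfrac{5}{6} + r - \tfrac{r^2}{6}\right]\quad (N=3)
\end{equation*}
and
\begin{equation*}
w_1(r) = \frac{1}{16\pi^2}\left[\tfrac{3}{2} - r^2(\log r)^2 + 2 r^2 \log r - \tfrac{3 r^2}{2}\right]\quad (N=2).
\end{equation*}

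Next I would study the ratio $r \mapsto w_1(r)/w_0(r)$ on $(0,1)$. Near $r = 0$, $w_0$ has a strictly stronger singularity than $w_1$ (logarithmic versus algebraic for $N=3$, bounded versus logarithmic for $N=2$), so the ratio tends to $0$. Both $w_0$ and $w_1$ vanish linearly in $(1-r)$ at the boundary, and a Taylor expansion around $r = 1$ gives the boundary limits $1/(12\pi)$ when $N = 3$ and $1/(8\pi)$ when $N = 2$. To show these limits are in fact the suprema of the ratio, I would verify the pointwise inequality $w_1(r) \le M w_0(r)$ on $(0,1)$ with the corresponding $M$. After clearing denominators, this reduces to proving that an explicit elementary function $f(r)$, with $f(1) = 0$, satisfies $f(r) \le 0$ on $(0,1)$. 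The cleanest route is iterated differentiation: check that $f$ and several of its derivatives vanish at $r = 1$, then exhibit a higher derivative with a definite sign on $(0,1)$ and propagate that sign back through successive antidifferentiations. In the three-dimensional case the analysis terminates quickly at a derivative of the form $3(r^{-2} - 1) > 0$. Since $1/(12\pi)$ and $1/(8\pi)$ are both comfortably below $1/4$, the bound $a_2 < 1/4$ follows.

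The main obstacle I anticipate is the two-dimensional case, where $f$ carries $(\log r)^2$ terms and one must differentiate roughly four times before reaching a derivative with unambiguous sign; the algebra has to be executed carefully, and it is not obvious a priori at which order the Taylor coefficients of $f$ at $r=1$ first fail to vanish. A weaker but self-contained alternative, in case the sharp calculation proves too delicate, is to sacrifice the exact supremum and instead obtain any explicit pointwise bound $w_1 \le M w_0$ with $M < 1/4$: the kernel estimate $G_{B_1}(x,y) \le c_N \Phi(x-y)$ combined with Lemma \ref{lm 4.1} (applied to $w_0^2 \le c_N^2 \Phi^2$) already produces such a bound provided the numerical constants are tracked.
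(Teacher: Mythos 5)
Your proposal is correct and follows essentially the same route as the paper: reduce to the radial ODE on $B_1(0)$, integrate to get $w_1$ in closed form (your explicit expressions and the boundary values $a_2=\tfrac{1}{12\pi}$ for $N=3$ and $a_2=\tfrac{1}{8\pi}$ for $N=2$ agree with the paper's, which obtains them as $w_1'(1)/w_0'(1)$ after asserting that $r\mapsto w_1(r)/w_0(r)$ is increasing), and conclude since both values are far below $\tfrac14$. The only difference is cosmetic: where the paper asserts monotonicity of the ratio, you propose to verify $w_1\le Mw_0$ by sign-checking derivatives, which is an equally routine calculus step.
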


 \begin{proof}
  When $\Omega=B_1(0)$, take $\xi(x)=1-|x|$ as a test function, we derive
 \begin{equation}\label{b1}
\int_{B_1(0)}|\nabla w_0|\, dx =  \int_{B_1(0)} \nabla w_0\cdot \nabla(1-|x|) \, dx      =  1.
 \end{equation}

Since $w_1$ is radial symmetric and decreasing, then
$$  -(r^{N-1}w_1'(r))'   =  r^{N-1} w_0^2.$$
So for $N=3$,
$$w_1'(r)=\frac1{16\pi ^2}  r^{-2}\int_0^r(1-t)^2 dt $$
and
$$w_1(r)=\frac1{48\pi ^2}\int_r^1  s^{-2} [(1-s)^3-1]ds=\frac1{48\pi ^2}\left[3(r-1)-3\ln r-\frac{r^2-1}2\right]. $$
Then
$$\frac{w_1(r)}{w_0(r)}= \frac1{12\pi} \left[3r-3\frac{r\ln r}{1-r}+\frac{r+r^2}2\right], $$
then $r\mapsto\frac{w_1(r)}{w_0(r)}$ is increasing,
so
$$a_2=\lim_{r\to1}\frac{w_1(r)}{w_0(r)}=\frac{w_1'(1)}{w_0'(1)}.$$

So for $N=2$,
$$w_1'(r)=\frac1{4\pi ^2}  r^{-1}\int_0^r (\ln t)^2 t\, dt $$
and
$$w_1(r)=\frac1{8\pi ^2}\int_r^1  \left[ s(\ln s)^2 - s\ln s -\frac{s}2\right]ds. $$
Then
$$\frac{w_1(r)}{w_0(r)}= \frac{-\frac{r^2(\ln r)^2}2+ r^2\ln r+\frac{1-r^2}4}{-\frac1{2\pi}\ln r}, $$
then $r\mapsto\frac{w_1(r)}{w_0(r)}$ is increasing,
so
$$a_2=\lim_{r\to1}\frac{w_1(r}{w_0(r)}=\frac{w_1'(1)}{w_0'(1)}.$$

We see that
\begin{eqnarray*}
 -w_1'(1)  = \arraycolsep=1pt\left\{
\begin{array}{lll}
 \displaystyle   \frac1{48\pi^2}   \quad
 &{\rm if}\quad N=3,\\[2mm]
 \phantom{     }
 \displaystyle   \frac1{16\pi^2}    \quad
 &{\rm if}\quad N=2
\end{array}\right.
\end{eqnarray*}
and
$$-w_0'(1)= \arraycolsep=1pt\left\{
\begin{array}{lll}
 \displaystyle   \frac1{4\pi }   \quad
 &{\rm if}\quad N=3,\\[2mm]
 \phantom{     }
 \displaystyle   \frac1{2\pi}    \quad
 &{\rm if}\quad N=2,
\end{array}\right.
 $$
so $$\alpha_2=\arraycolsep=1pt\left\{
\begin{array}{lll}
 \displaystyle   \frac1{12\pi }   \quad
 &{\rm if}\quad N=3,\\[2mm]
 \phantom{     }
 \displaystyle   \frac1{8\pi}    \quad
 &{\rm if}\quad N=2.
\end{array}\right. $$
Therefore, we have that $\alpha_2 <1/4.$
The proof is thus complete.
 \end{proof}

\begin{corollary}\label{cr 0}
 Assume that $N=2$ or $3$, $p=2$ $M_\theta$ is defined by (\ref{kirchhoff}) with $\theta\ge0$, $a_2$ is given by (\ref{1.1-2}), $\Omega=B_1(0)$.
Then for any $k>0$, problem (\ref{eq 1.1}) has a nonnegative solution $u_k$ satisfying (\ref{md}) and (\ref{1.2}).
\end{corollary}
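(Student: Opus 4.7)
The plan is to deduce this corollary as a direct consequence of Theorem \ref{teo 0}, by checking that when $\Omega=B_1(0)$, $p=2$, $\theta\ge 0$ and $N\in\{2,3\}$, the structural condition (\ref{1.1-1}) automatically holds for every $k>0$.

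First I would note that $\Omega=B_1(0)$ gives $r_0=1$, and $\theta\ge 0$ gives $\theta_-=0$, so the admissibility range for $k$ in Theorem \ref{teo 0} reduces to $k>0$. Substituting $p=2$ into (\ref{1.1-1}) turns the inequality into
\begin{equation*}
\frac{k}{\theta+k}\le \frac{1}{4a_2}.
\end{equation*}
Since the left-hand side is at most $1$ for every $k>0$ (with equality only in the limit or when $\theta=0$), the required estimate is implied by $4a_2\le 1$.

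The second step is to invoke Lemma \ref{lm 2.1}, which establishes the strict bound $a_2<\tfrac14$ in both $N=2$ and $N=3$. This gives $\tfrac{1}{4a_2}>1\ge \tfrac{k}{\theta+k}$, so (\ref{1.1-1}) is satisfied for every $k>0$ without further restriction.

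With (\ref{1.1-1}) verified, the hypotheses of Theorem \ref{teo 0} are met, so that theorem supplies a nonnegative solution $u_k$ of problem (\ref{eq 1.1}) with $M_\theta(u_k)\ge \theta+k>0$ (which is exactly (\ref{md}) in this setting) and the prescribed isolated-singularity behavior (\ref{1.2}). There is no real obstacle here beyond correctly dovetailing the $p=2$ instance of condition (\ref{1.1-1}) with the sharp estimate provided by Lemma \ref{lm 2.1}; the corollary is essentially a bookkeeping consequence of the two earlier results.
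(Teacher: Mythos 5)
Your argument is correct and is exactly the route the paper intends: the corollary is stated right after Lemma \ref{lm 2.1} precisely so that $a_2<\tfrac14$ makes the $p=2$ instance of (\ref{1.1-1}), namely $\tfrac{k}{\theta+k}\le\tfrac{1}{4a_2}$, hold for every $k>0$ when $\theta\ge0$ and $r_0=1$, after which Theorem \ref{teo 0} applies verbatim. No issues.
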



\section{  Solutions with $M_\theta(u)>0$ }

In order to do estimates on $M_\theta(u)$, we introduce   the  following lemma.

\begin{lemma}\label{lm 2.3}
Let $u,v$ be a radially symmetric, decreasing and nonnegative  functions in  $C^1(B_1(0)\setminus\{0\}) \cap W_0^{1,1}(B_1(0))$ such that
\begin{equation}\label{2.2}
 \norm{u}_{L^1(B_1(0))}\ge \norm{v}_{L^1(B_1(0))} \quad{\rm and}\quad \liminf_{|x|\to0^+} [u(x)-v(x)]|x|^{N-1}\ge0.
\end{equation}

Then
$$\int_{B_1(0)}|\nabla u| dx \ge \int_{B_1(0)}|\nabla v| dx. $$

\end{lemma}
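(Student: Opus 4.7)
The plan is to convert both gradient $L^1$-norms into weighted $L^1$-norms of $u$ and $v$ themselves, using the test function $\xi(x):=1-|x|$ (mirroring the technique already used in the proof of Lemma \ref{lm 2.1}), and thereby reduce the lemma to a one-dimensional weighted comparison. Writing $u(x)=\tilde u(|x|)$ and $v(x)=\tilde v(|x|)$, the radial monotonicity gives $\nabla u\cdot\nabla\xi=-\tilde u'(|x|)=|\nabla u|$ and the analogous identity for $v$. Since $\xi\in W_0^{1,\infty}(B_1(0))$ with $-\Delta\xi=(N-1)/|x|$ distributionally in $\R^N$ for $N\ge 2$ (no Dirac mass at the origin, as $\int_{\partial B_\epsilon}\phi\,dS\to 0$ for any test function $\phi$ when $N\ge 2$), integration by parts on $B_1(0)\setminus B_\epsilon(0)$ followed by $\epsilon\to 0^+$ yields
$$\int_{B_1(0)}|\nabla u|\,dx \;=\; (N-1)\int_{B_1(0)}\frac{u(x)}{|x|}\,dx,$$
and the analogous identity for $v$. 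The boundary contribution at the origin vanishes because $u\in W_0^{1,1}$ radial nonincreasing forces $\tilde u(\epsilon)\epsilon^{N-1}\le\int_\epsilon^1(-\tilde u'(s))s^{N-1}\,ds\to 0$. Subtracting, the lemma reduces to showing $\int_{B_1(0)}(u(x)-v(x))/|x|\,dx\ge 0$.

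Setting $\phi:=u-v$ and decomposing $1/|x|=1+(1-|x|)/|x|$, with $(1-|x|)/|x|\ge 0$ on $B_1(0)$,
$$\int_{B_1}\frac{\phi}{|x|}\,dx \;=\; \int_{B_1}\phi\,dx \;+\; \int_{B_1}\phi\cdot\frac{1-|x|}{|x|}\,dx,$$
where the first integral is nonnegative by the $L^1$ hypothesis of \eqref{2.2}. The liminf hypothesis supplies a radius $r^*>0$ on which $\phi\ge 0$. Since the weight $(1-|x|)/|x|$ is radially nonincreasing with $(1-|x|)/|x|\ge(1-r^*)/r^*$ on $\{|x|\le r^*\}$ and the reverse inequality on $\{|x|>r^*\}$, provided $\phi\le 0$ on $\{|x|>r^*\}$, both sign combinations give
$$\int_{B_1}\phi\cdot\frac{1-|x|}{|x|}\,dx \;\ge\; \frac{1-r^*}{r^*}\int_{B_1}\phi\,dx \;\ge\; 0,$$
completing the comparison.

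The delicate step, and the main obstacle, is justifying the ``single sign-change'' structure $\phi\le 0$ on $\{|x|>r^*\}$: since $\tilde u-\tilde v$ is a difference of two nonincreasing functions it may a priori oscillate, so this monotone sign pattern is not automatic from the two hypotheses alone. The plan is to choose $r^*$ as the maximal radius on which $\tilde u\ge\tilde v$, then exploit the radial monotonicity of $\tilde u$ and $\tilde v$ together with the $L^1$ mass balance in \eqref{2.2} to rule out any oscillatory return of $\phi$ to strict positivity beyond $r^*$ (an extra positive region further out would force the total mass $\int_{B_1}\phi\,dx$ to exceed what the first sign-change can accommodate unless accompanied by a compensating negative region still inside a sub-ball where the weight $(1-|x|)/|x|$ remains under control). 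If this bookkeeping cannot be carried out directly, an alternative route is to replace $r^*$ by the optimal cutoff solving $\int_{\{|x|\le r\}}\phi\,dx=\int_{B_1}\phi\,dx$ and rerun the weighted comparison with this choice; the monotone radial structure of $u,v$ should then make the resulting error nonnegative.
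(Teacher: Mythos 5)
Your first half is correct, and it is essentially the paper's own computation carried out more carefully. Testing against $\xi(x)=1-|x|$ (equivalently, integrating $-\omega_N\int_r^1 f'(s)s^{N-1}ds$ by parts) gives, for a radial nonincreasing $f\in C^1(B_1(0)\setminus\{0\})\cap W^{1,1}_0(B_1(0))$,
$$\int_{B_1(0)}|\nabla f|\,dx=\omega_N\lim_{r\to0^+}f(r)r^{N-1}+(N-1)\int_{B_1(0)}\frac{f(x)}{|x|}\,dx,$$
so the lemma reduces to the \emph{weighted} comparison $\int_{B_1(0)}\frac{u-v}{|x|}\,dx\ge0$. The paper's proof performs the same integration by parts but then writes the unweighted integral $\int_{B_1(0)}[u-v]\,dx$ where the computation actually produces $\int_{B_1(0)}\frac{u-v}{|x|}\,dx$, and concludes directly from \eqref{2.2}; your bookkeeping exposes precisely the step that the paper glosses over.

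The second half of your argument, however, does not close, and cannot be closed as written. First, the assertion that $\liminf_{|x|\to0^+}[u(x)-v(x)]|x|^{N-1}\ge0$ ``supplies a radius $r^*>0$ on which $\phi=u-v\ge0$'' is false: the liminf equals $0$ whenever $u-v$ stays bounded near the origin, even if $u-v$ is strictly negative there (take $\phi\equiv-1$). Second, and more seriously, the target inequality $\int_{B_1(0)}\frac{u-v}{|x|}\,dx\ge0$ is simply not a consequence of \eqref{2.2}: for $N=2$ take $v(r)=(1-r/\va)_+$ (smoothed at $r=\va$) and $u(r)=\va^2(1-r)$; both are radial, nonincreasing, nonnegative and in $C^1(B_1(0)\setminus\{0\})\cap W^{1,1}_0(B_1(0))$, with $\norm{u}_{L^1(B_1(0))}=\norm{v}_{L^1(B_1(0))}=\pi\va^2/3$ and $\liminf_{r\to0^+}(u-v)(r)\,r=0$, yet $\int_{B_1(0)}|\nabla u|\,dx=\pi\va^2<\pi\va=\int_{B_1(0)}|\nabla v|\,dx$. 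So the ``delicate step'' you flag is not a bookkeeping issue that a cleverer cutoff $r^*$ can repair: the single sign-change structure genuinely fails, and the lemma does not follow from the two stated hypotheses. Your reduction would immediately yield the conclusion under a stronger hypothesis such as $u\ge v$ pointwise or $\int_{B_1(0)}u/|x|\,dx\ge\int_{B_1(0)}v/|x|\,dx$ (which is closer to what the application in Section 3 actually has at its disposal, since there $u$ dominates a rearranged Green function), and that is the honest form in which your argument, and the paper's, can be salvaged.
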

\begin{proof} For  radially symmetric decreasing function   $f\in   C^1(B_1(0)\setminus\{0\}) \cap W_0^{1,1}(B_1(0))$,
  we have that
$$\omega_{N} f(r)r^{N-1}+(N-1)\omega_{N} \int_r^1 f(s)s^{N-2} ds =-\omega_{N}\int_r^1f'(s)s^{N-1} ds,$$
then   we have that
$$\omega_{N} \lim_{|x|\to0^+}(u-v)(x)|x|^{N-1}+(N-1)  \int_{B_1(0)} [u(x)-v(x)] dx = \int_{B_1(0)}|\nabla u| dx -\int_{B_1(0)}|\nabla v| dx.$$
From (\ref{2.2}), we have  that
  $$ \int_{B_1(0)}|\nabla u| dx \ge\int_{B_1(0)}|\nabla v| dx.$$
This finishes the proof. \hfill$\Box$
\end{proof}

\smallskip

\noindent{\bf Proof of Theorem \ref{teo 0}.}  We search  for   distributional solutions of
\begin{equation}\label{eq 6.0}
-\Delta u =\frac1{M_\theta(u)}u^p+k\delta_0  \quad{\rm in}\quad  \Omega,\qquad u=0\quad{\rm on}\quad \partial \Omega
\end{equation}
by using the Schauder fixed-point theorem.
 Let $w_0,w_1$ be the solutions of (\ref{3.1}) and  denote
 \begin{equation}\label{3.3}
w_t=tk^pw_1+kw_0,
\end{equation}
where the parameter $t>0$.

{\it  We claim that  there exists   $k_p>0$ independent of $\theta$ such that  for $ k\in(0, k_p]$, if $\theta+ r_0^{-1} k>0$
  there exists $t_p>0$ such that
\begin{equation}\label{6.1}
 t_pk^pw_1\ge \frac{\mathbb{G}_{\Omega}[w_{t_p}^p]}{\theta+ r_0^{-1} k}.
\end{equation}
}

We observe that if
\begin{equation}\label{6.3.5}
\frac{(a_p tk^p+k)^p}{\theta+r_0^{-1}k} \le tk^p  ,
\end{equation}
then $w_t$ verifies (\ref{6.1}), since
\begin{eqnarray*}
\frac{\mathbb{G}_{\Omega}[w_{t_p}^p]}{\theta+ r_0^{-1} k}   &\le &  \frac{(a_p tk^p+k)^p  \mathbb{G}_{\Omega}[w_0^p] }{\theta+ r_0^{-1} k}
\\&=& \frac{(a_p tk^p+k)^p    }{\theta+ r_0^{-1} k} w_1
\\&\ge & tk^p w_1.
\end{eqnarray*}

Now we discuss what condition on $k$  guarantee that (\ref{6.3.5}) holds for some $t>0$. In fact, (\ref{6.3.5}) is equivalent to
\begin{equation}\label{6.3.6}
 (a_p tk^{p-1}+1)^p  \le t  (\theta+ r_0^{-1} k)
\end{equation}
or in the form
$$
s=t  (\theta+ r_0^{-1} k)\quad{\rm and}\quad  \left(\frac{a_p  k^{p-1}}{\theta+ r_0^{-1} k}s +1\right)^p  \le   s.
$$

For $p>1$, since the function $f(s) = (\frac1p(\frac{p-1}p)^{p-1}s+1)^p$ intersects the line $g(s) = s$ at the unique
point $s_p=\left(\frac p{p-1}\right)^{p}$,  so $k$ may be chosen such that
\begin{equation}\label{2.10}
 \frac{a_p  k^{p-1}}{\theta+ r_0^{-1} k}  \le \frac1p\left(\frac{p-1}p\right)^{p-1}.
\end{equation}
In fact, (\ref{1.1-1}) implies \eqref{2.10}.
Therefore, for $k>r_0\theta_-$ satisfying (\ref{1.1-1}) and taking $t_p=(\theta+k)^{-1}\left(\frac p{p-1}\right)^{p}, $
function $w_{t_p}$ verifies (\ref{6.1}).

 Let
\begin{eqnarray*}
\mathcal{D}_k &=&\big\{u\in W^{1.1 }_0(\Omega):  \  0\le u\le  t_pk^pw_1 \big\}.
\end{eqnarray*}
Denote
$$\mathcal{T}u= \frac1{M_\theta (u+kw_0)}\mathbb{G}_\Omega[(u+kw_0)^p],\quad\forall\, u\in \mathcal{D}_k.$$

 {\it We claim that
\begin{equation}\label{2.5}
M_\theta (u+kw_0)\ge \theta+r_0^{-1}k>0\quad{\rm for}\ \ u\in \mathcal{D}_k.
\end{equation}}

For $u\in\mathcal{D}_k$,   we may let $v_n\in C^1_0(\Omega)$  be a sequence of nonnegative functions  converging to  $u$ in $W^{1,1}_0(\Omega)$. Let $u_n=v_n+kw_0$, and  by the fact that $w_0\in C^1(\Omega\setminus\{0\})\cap W_0^{1,1}(\Omega)$, then $u_n\in  C^1(\Omega\setminus\{0\})\cap W_0^{1,1}(\Omega)$, $u_n\ge kw_0$ in $\Omega\setminus\{0\}$  and $u_n$ converge to $u+kw_0$ in $W^{1,1}(\Omega)$.
By the symmetric decreasing arrangement, we may denote  $u^*_n$,   the symmetric decreasing rearranged function of $u_n$ in $B_{r_0}(0)$, where
$r_0\ge1$ such that $|B_{r_0}(0)|=|\Omega|$. Observe that
$$\liminf_{|x|\to0^+} u^*_n(x)|x|^{N-1} \ge 0=k\lim_{|x|\to0^+} w_0(x)|x|^{N-1}$$
and
$$\int_{\Omega} u_n dx\ge k \int_{\Omega} w_0 dx.$$

By P\'{o}lya-Szeg\H{o} inequality, we have that
 $$\norm{\nabla u_n}_{L^1(\Omega)}\ge \norm{\nabla u^*_n}_{L^1(B_{r_0}(0))}=r_0^{-1}\norm{\nabla w^*_n}_{L^1(B_{1}(0))}. $$
where $w_n^*(x)=r_0^{-N}u^*_n(r_0x)$ for $x\in B_1(0)$.

 Let $w_{B_1(0)}=k\mathbb{G}_{B_1(0)}[\delta_0]$, since $B_1(0)\subset \Omega$,  Kato's inequality implies that
$$ \int_{\Omega}w_0 dx \ge  \int_{B_1(0)}w_{B_1(0)} dx.$$
Thus,
\begin{equation}\label{2.4}
\int_{B_1(0)} w^*_n dx=\int_{B_{r_0}(0)} u^*_n dx\ge  \int_{B_1(0)}w_{B_1(0)} dx.
\end{equation}

Thus, by Lemma \ref{lm 2.3}, (\ref{2.4}) and (\ref{b1}), we have
$$\norm{\nabla w_n^*}_{L^1(B_1(0))} \ge \norm{\nabla kw_{B_1(0)}}_{L^1(B_1(0))} =k $$
Therefore, passing to the limit  as $n\to+\infty$ in the above inequality we get that
 $$M_\theta(u+kw_0)\ge \theta+r_0^{-1}\norm{\nabla kw_{B_1(0)}}_{L^1(B_1(0))} =\theta+r_0^{-1}k,$$
which implies (\ref{2.5}).

 Therefore, from (\ref{6.1}) it follows that
 \begin{eqnarray*}
 \mathcal{T}u =  \frac{\mathbb{G}_\Omega[(kw_0+u)^p ]}{M_\theta (kw_0+u)}
\le  \frac{\mathbb{G}_\Omega[(kw_0+t_pk^pw_1)^p ]}{\theta+r_0^{-1}k}
  \le  t_p k^p w_1,
 \end{eqnarray*}
then
$$
  \mathcal{T} \mathcal{D}_k \subset  \mathcal{D}_k.
$$
Note that for $u\in \mathcal{D}_k$, one has that $(u+kw_0)^p\in L^{\sigma}(\Omega)$ with $\sigma\in(1,\frac1p\frac{N}{N-2})$, then  $\mathcal{T} \mathcal{D}_k\subset W^{2,\sigma}(\Omega)$, where   $\sigma\in (1, \frac1p\frac{N}{N-2})$.
Since the embeddings $W^{2,\sigma}(\Omega)\hookrightarrow W^{1,1}(\Omega),\, L^1(\Omega)$  are  compact and
then  $\mathcal{T}$ is a compact operator.

Observing that $\mathcal{D}_k$ is a closed and convex set in $L^1(\Omega)$, we may apply the Schauder
fixed-point theorem to derive that there exists $v_k\in \mathcal{D}_k$ such that
$$\mathcal{T}v_k=v_k. $$
Since $ 0 \le v_k\le  t_pk^pw_1$, so $v_k$ is locally bounded in $\Omega\setminus\{0\}$,
then $u_k:=v_k+kw_0$ satisfies  (\ref{1.2}), and by interior regularity results, $u_k$ is a positive classical solution of (\ref{eq 1.1}).
From Theorem \ref{teo 1}   we deduce that  $u_k$ is a distributional solution of (\ref{eq 1.2}). \hfill$\Box$

\setcounter{equation}{0}
\section{  Solutions with $M_\theta(u)<0$}

For $\theta<0$ and $M_\theta(u)<0$,   equation (\ref{eq 1.2}) could be written as
\begin{equation}\label{eq 3.3}
 -\Delta  u +\frac1{-M_\theta(u)}u^p=k\delta_0  \quad{\rm in}\quad  B_1(0),\qquad u=0\quad{\rm on}\quad \partial B_1(0).
\end{equation}

\begin{lemma}\label{lm sub}

 Let  $p\in(1,p^*)$ and $\lambda>0$. For any $k>0$,   the  problem
\begin{equation}\label{eq 3.3.1}
 -\Delta  u +\lambda u^p=k\delta_0  \quad{\rm in}\quad  B_1(0),\qquad u=0\quad{\rm on}\quad \partial B_1(0)
\end{equation}
has a unique positive weak solution $u_{\lambda,k}$ verifying that
\begin{equation}\label{4.1.0}
 \lim_{|x|\to0^+} u_{\lambda,k}(x)|x|^{N-2}=c_Nk.
\end{equation}
Furthermore,    $u_{\lambda,k}$ is radially symmetric and decreasing with to $|x|$ and
the map $\lambda\mapsto u_{\lambda,k}$ is  decreasing.

\end{lemma}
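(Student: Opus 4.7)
The plan is to adapt the classical Benilan--Br\'ezis--V\'eron framework for absorption equations with measure data: regularize the Dirac source, solve the regularized problem classically, extract a limit via subcritical estimates, and then obtain the qualitative properties from Kato's inequality and comparison.

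For existence, I would fix a nonnegative radial sequence $\{\delta_n\}\subset C_c^\infty(B_{1/n}(0))$ with $\int\delta_n\,dx=1$ and $\delta_n\rightharpoonup\delta_0$ in the sense of measures. The regularized problem $-\Delta u_n+\lambda u_n^p=k\delta_n$ with zero Dirichlet data has a unique nonnegative classical solution $u_n$, radial and nonincreasing in $|x|$ (standard monotone iteration, with $0$ as subsolution and $k\mathbb{G}_{B_1(0)}[\delta_n]$ as supersolution, the latter valid because of the absorption sign). Comparison yields $u_n\le k\mathbb{G}_{B_1(0)}[\delta_n]$ pointwise. Since $p<p^*$, one can choose $s>1$ with $sp<N/(N-2)$ so that $\{k\mathbb{G}_{B_1(0)}[\delta_n]\}$ is uniformly bounded in $L^{sp}(B_1(0))$, hence $\{u_n^p\}$ is uniformly bounded in $L^{s}(B_1(0))$. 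Standard elliptic regularity then gives uniform $W^{2,s}_{\mathrm{loc}}(\overline{B_1(0)}\setminus\{0\})$ bounds, and a diagonal extraction produces a subsequence converging in $C^{1}_{\mathrm{loc}}(\overline{B_1(0)}\setminus\{0\})$ to a limit $u_{\lambda,k}$ satisfying the equation weakly (the weak convergence $\delta_n\rightharpoonup\delta_0$ and the $L^s$ weak convergence $u_n^p\rightharpoonup u_{\lambda,k}^p$ allow passage to the limit in the weak formulation).

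For the qualitative properties, I would use the representation $u_{\lambda,k}=kw_0-\lambda\mathbb{G}_{B_1(0)}[u_{\lambda,k}^p]$. Because $u_{\lambda,k}\le kw_0\le C|x|^{2-N}$, Lemma \ref{lm 4.1} applied with $\tau=(N-2)p<N$ (subcriticality again) gives $\mathbb{G}_{B_1(0)}[u_{\lambda,k}^p](x)=o(|x|^{2-N})$ as $|x|\to 0^+$, yielding \eqref{4.1.0}. Uniqueness comes from Kato's inequality: for two solutions $u,v$ the difference has no Dirac singularity at the origin and satisfies $-\Delta(u-v)_++\lambda(u^p-v^p)\mathrm{sgn}_+(u-v)\le 0$ in the sense of distributions; the monotonicity of $t\mapsto t^p$ then forces $(u-v)_+\equiv 0$, and symmetrically $u\equiv v$. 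Radial symmetry follows from uniqueness and rotation invariance of the problem, and the radial decrease from the ODE $(r^{N-1}u_r)_r=\lambda r^{N-1}u^p\ge 0$ combined with $r^{N-1}u_r(r)\to -c_Nk(N-2)<0$ as $r\to 0^+$, which forces $u_r<0$ throughout $(0,1)$.

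For monotonicity in $\lambda$: if $\lambda_1<\lambda_2$, then
$$-\Delta u_{\lambda_1,k}+\lambda_2 u_{\lambda_1,k}^p=k\delta_0+(\lambda_2-\lambda_1)u_{\lambda_1,k}^p\ge k\delta_0,$$
so $u_{\lambda_1,k}$ is a supersolution of the $\lambda_2$-problem; another application of Kato's inequality yields $u_{\lambda_1,k}\ge u_{\lambda_2,k}$. The main obstacle I expect is establishing the uniform $L^s$ bound on $\{u_n^p\}$ required to pass to the limit, which rests squarely on $u_n\le k\mathbb{G}_{B_1(0)}[\delta_n]$ and on the subcritical restriction $p<p^*$; the same subcriticality reappears in the estimate of $\mathbb{G}_{B_1(0)}[u_{\lambda,k}^p]$ near the origin, and in the absence of either the argument breaks down.
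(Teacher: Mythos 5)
Your argument is correct, and it is in substance the same proof the paper intends: the paper disposes of the lemma in three lines by citing V\'eron's existence theorem (which is proved by exactly the regularization $k\delta_n\to k\delta_0$ with the barrier $u_n\le k\mathbb{G}_{B_1(0)}[\delta_n]$ and the subcritical $L^s$ bounds you describe), Kato's inequality for uniqueness and for the monotonicity in $\lambda$, and the references \cite{GNN,S} for symmetry. The one point where you genuinely deviate is radial symmetry: the paper invokes the moving-plane method, whereas you deduce symmetry directly from uniqueness plus rotational invariance of the ball and the data, and then get strict radial decrease from the ODE $(r^{N-1}u_r)'=\lambda r^{N-1}u^p\ge0$ together with the sign of $r^{N-1}u_r$ near the origin and the boundary condition. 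Your route is more elementary and is available precisely because you have already proved uniqueness; it buys you independence from the moving-plane machinery, at the cost of working only when uniqueness is in hand (moving planes would also apply to non-unique positive solutions). One cosmetic remark: for $N=2$ the limit of $r^{N-1}u_r$ is $-c_2k$ rather than $-c_Nk(N-2)$, and the normalization in \eqref{4.1.0} should be read as $\Phi^{-1}$ rather than $|x|^{N-2}$; neither affects your argument.
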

\begin{proof} The existence could be seen \cite[theorem 3.7]{V} and uniqueness follows by Kato's inequality  \cite[theorem 2.4]{V}.
The radial symmetry of $u_{\lambda,k}$ and decreasing monotonicity with to $|x|$ could be derived by the method of moving plane, see \cite{GNN,S} for the details.
It follows from Kato's inequality that the map $\lambda\mapsto u_{\lambda,k}$ is  decreasing.
The proof ends.\hfill$\Box$\medskip
\end{proof}

\noindent{\bf Proof of Theorem \ref{teo 00}.} $(i)$
 Observe that
$$M_\theta(kw_0)=k\int_{B_1(0)}|\nabla w_0| dx+\theta=k+\theta<0.$$

From Lemma \ref{lm sub} with $\lambda=\lambda_1:=-M_\theta^{-1}(kw_0)$,   problem (\ref{eq 3.3.1}) with $\lambda=\lambda_1$
has a unique solution $v_{\lambda_1}$ verifying
that
$$0<v_{\lambda_1}\le kw_0,$$
then it implies that
$$  k\int_{B_1(0)}|\nabla v_{\lambda_1}| dx\le \int_{B_1(0)}|\nabla kw_0|\, dx$$
and
$$M_\theta(v_{\lambda_1})=k\int_{B_1(0)}|\nabla v_{\lambda_1}| dx+\theta\le k\int_{B_1(0)}|\nabla w_0| dx+\theta=k+\theta,$$
thus,
$$\theta< M_\theta(v_{\lambda_1})<k+\theta, $$
that is,
 \begin{equation}\label{5.1}
 \frac1{-M_\theta(v_{\lambda_1})} <\lambda_1.
 \end{equation}

In terms of Lemma \ref{lm sub}, let $\lambda_2=-M_\theta^{-1}(v_{\lambda_1})$ and $\{v_{\lambda_2}\}$ be the solution of problem (\ref{eq 3.3.1}) with $\lambda=\lambda_2$.
Since $\lambda_2>\lambda_1$, then
$$v_{ \lambda_1}< v_{ \lambda_2}<kw_0.$$
So it follows by Lemma \ref{lm 2.3} that
$$  M_\theta(v_{\lambda_1})< M_\theta(v_{\lambda_2})<  M_\theta(kw_{0}), $$
that is,
 \begin{equation}\label{5.2}
 \frac1{-M_\theta(v_{\lambda_2})} > \lambda_2.
 \end{equation}

\emph{We claim that the map $\lambda\in[\lambda_2,\lambda_1] \mapsto M_\theta(u_{\lambda,k})$ is continuous.}\smallskip

At this moment, we assume that the above argument is true.  Let
$$F(\lambda)=\frac1{-M_\theta(v_{\lambda})}-\lambda,$$
where $v_\lambda$ is the solution of (\ref{eq 3.3.1}) with $\lambda\in[\lambda_2,\lambda_1]$.
  Since   $F$ is continuous in $[\lambda_2,\lambda_1]$,   by (\ref{5.1}), (\ref{5.2})  and the   mean value theorem,
there exists $\lambda_0\in(\lambda_2,\lambda_1)$ such that $F(\lambda_0)=0$,
that is, (\ref{eq 3.3}) has a solution $u_k$ with
$\frac1{-M_\theta(u_k)}=\lambda_0$. From standard regularity, we have that $u_k$ is
a classical solution of (\ref{eq 1.1}) and verifies    the   corresponding properties    in the lemma. \smallskip

Now we prove that the map $\lambda\in[\lambda_2,\lambda_1] \mapsto M_\theta(u_{\lambda,k})$ is continuous. Let $\lambda_2\le \lambda'<\lambda''\le\lambda_1$ and
$u_{\lambda',k}$ and $u_{\lambda'',k}$ be the solutions of (\ref{eq 3.3}) with $\lambda=\lambda'$ and $\lambda=\lambda''$ respectively. Then
$$u_{\lambda'',k}< u_{\lambda',k}$$
and
\begin{equation}\label{5.3}
 M_\theta(u_{\lambda'',k})<M_\theta(u_{\lambda',k}).
\end{equation}

Let $\bar u=u_{\lambda'',k}+(\frac{\lambda''-\lambda'}{\lambda_2})^{1/p}w_0$. Then
\begin{eqnarray*}
 -\Delta \bar u+\lambda'\bar u^p &\ge & -\Delta u_{\lambda'',k}+ \left(\frac{\lambda''-\lambda'}{\lambda_2}\right)^{\frac1p}(-\Delta)w_0 + \lambda'u_{\lambda'',k}^p+\lambda' \frac{\lambda''-\lambda'}{\lambda_2} w_0^p  \\
   &\ge &  -\Delta u_{\lambda'',k}+\lambda'' u_{\lambda'',k}^p
   \\&=&k\delta_0.
\end{eqnarray*}
 Therefore Kato's inequality implies that
$$ u_{\lambda',k}\le u_{\lambda'',k}+\left(\frac{\lambda''-\lambda'}{\lambda_2}\right)^{\frac1p}w_0,$$
which yields that
$$M_\theta(u_{\lambda',k})\le M_\theta(u_{\lambda'',k}) +\left(\frac{\lambda''-\lambda'}{\lambda_2}\right)^{\frac1p}k. $$
  This  together  with (\ref{5.3}),   give
$$|M_\theta(u_{\lambda',k})-M_\theta(u_{\lambda'',k})|\le\left(\frac{\lambda''-\lambda'}{\lambda_2}\right)^{\frac1p}k\to0\quad {\rm as}\ \  |\lambda''-\lambda'|\to0, $$
thus,
the map $\lambda\in[\lambda_2,\lambda_1] \mapsto M_\theta(u_{\lambda,k})$ is continuous.\medskip

$(ii)$   It is    well  known that for  $p\in(1,p^*)$,   the  problem
\begin{equation}\label{eq 3.3.2}
 -\Delta  u + u^p=0 \quad{\rm in}\quad  \Omega\setminus\{0\},\qquad u=0\quad{\rm on}\quad \partial \Omega
\end{equation}
has a   positive solution $v_{p}$ verifying that
\begin{equation}\label{4.1.1}
 \lim_{|x|\to0^+} v_{p}(x)|x|^{\frac{2}{p-1}}=c_p,
\end{equation}
where $c_p=[\frac2{p-1}(\frac2{p-1}+2-N)]^{\frac1{p-1}}$.
Furthermore,    $v_{p}$   is the unique solution of (\ref{eq 3.3.2}) such  that
\begin{equation}\label{4.1.2}
\liminf_{|x|\to0^+} u(x)|x|^{\frac2{p-2}}>0.
\end{equation}

We observe that
$$v_\lambda :=\lambda^{-\frac1{p-1}} v_p$$
is the unique solution of
\begin{equation}\label{eq 3.3.3}
 -\Delta  u +\lambda u^p=0 \quad{\rm in}\quad  \Omega\setminus\{0\},\qquad u=0\quad{\rm on}\quad \partial \Omega
\end{equation}
in the  set of functions satisfying    (\ref{4.1.2}).

 For $p\in (\frac{N+1}{N-1},p^*)$, we have that
$\int_\Omega|\nabla u_p| dx<+\infty$,   so that
$$M_\theta(v_{\lambda}):=\lambda^{-\frac1{p-1}}m_2 +\theta<0\quad{\rm for}\ \ \lambda\in (\lambda_0,+\infty), $$
 where $m_2=\int_{\Omega}|\nabla u_p| dx$ and $\lambda_0=(m_2/(-\theta))^{p-1}$.

  We define
$$F(\lambda):=\frac1{\lambda^{-\frac1{p-1}}m_2 +\theta}+\lambda,\quad \lambda\in(\lambda_0,+\infty).$$
 Observe that  $F$ is continuous, increasing and
  $$\lim_{\lambda\to \lambda_0^+} F(\lambda)=-\infty,\qquad \lim_{\lambda\to +\infty} F(\lambda)=+\infty.$$
  Hence  there exists a unique $\bar\lambda$ such that
$$-\frac1{\bar\lambda^{-\frac1{p-1}}m_2 +\theta}=\bar\lambda.$$
 Meaning that   $-M_\theta^{-1}(v_{\bar\lambda})=\bar \lambda$.
  We then conclude that
(\ref{eq 1.1}) has a solution $u_p:=v_{\bar\lambda}$ with
$ M_\theta(u_p)<0$.  From (\ref{4.1.1}) and     the  definition of $v_\lambda$, we know that $u_p$ is not a weak solution of problem (\ref{eq 1.2}).

\section{ In the supercritical case }

In the super critical case that $ p^*\le p<2^*-1$, we have the following existence results.

\begin{teo}\label{teo 2}
$(i)$ Let $N\ge3$, $ p^*\le p<2^*-1$, $\theta\in\R$ and $\Omega$   be  a bounded smooth domain containing the origin.
 If \\ Case 1: $p>2$,  $p\ge p^*$ and $\theta>0$;\\
Case 2: $p=2\ge  p^*$, $\theta>0$ and $m_2<1$;\\
Case 3: $p^*\le p<2$ and $\theta<0$;\\
Case 4: $p^*\le p<2^*-1$, $p\not=2$ and $\theta=0$,\\
 then
 problem (\ref{eq 1.1}) has two positive solutions $u_{i}$ with $i=1,2$  satisfying that
 $$ M_\theta(u_i) >0,$$
 \begin{equation}\label{7.1}
  {\rm if}\ p\in \left(p^*,\frac{N+2}{N-2}\right),\quad  \lim_{|x|\to0^+}u_i(x)|x|^{\frac{2}{p-1}}= M_\theta(u_i) ^{\frac1{p-1}}c_p
 \end{equation}
 and
\begin{equation}\label{7.2}
{\rm if}\ p=p^*,\quad \lim_{|x|\to0^+}u_i(x)|x|^{N-2}(\ln |x|)^{\frac{N-2}2}= M_\theta(u_i) ^{\frac{N-2}{2}} c_{p^*},
\end{equation}
 where $c_p=[\frac2{p-1}(N-2-\frac2{p-1})]^{\frac1{p-1}}$ and $c_{p^*}=(\frac{N-2}{4})^{N-2}$.

$(ii)$ Let $N=4,\, 5$, $   p=2\in[p^*, 2^*-1)$, $\theta=0$ and $\Omega$   be   a bounded smooth domain containing the origin.
 If  $v$ is a solution of (\ref{eq 6.3.1}) such that $M_\theta(v)=1$, then
 for any $\lambda>0$, $u:=\lambda  v$ is a solution of
 problem (\ref{eq 1.1})  satisfying
 $ M_\theta(u)=\lambda >0$ and (\ref{7.1})--(\ref{7.2}).
\end{teo}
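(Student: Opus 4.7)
The plan is to reduce problem~(\ref{eq 1.1}) to a scalar algebraic equation in one real variable, the scaling parameter of a singular solution $v$ of the base problem~(\ref{eq 6.3.1}), which in the context of Section~5 reads $-\Delta v=v^p$ in $\Omega\setminus\{0\}$ with $v=0$ on $\partial\Omega$ and the prescribed isolated singularity $v(x)\sim c_p|x|^{-2/(p-1)}$ (or its logarithmic critical analogue for $p=p^*$). Since $p\ge p^*$ forces $p>(N+1)/(N-1)$, the singular profile has integrable gradient and $m:=\int_\Omega|\nabla v|\,dx<\infty$. Trying the ansatz $u=\mu v$ with $\mu>0$ one computes $-\Delta u=\mu(-\Delta v)=\mu v^p=\mu^{1-p}u^p$ together with $M_\theta(u)=\theta+\mu m$, so $u$ solves~(\ref{eq 1.1}) if and only if $M_\theta(u)=\mu^{p-1}$, that is
\[
\Psi(\mu):=\mu^{p-1}-m\mu=\theta,\qquad \mu>0,\qquad \theta+\mu m>0.
\]
Because this relation forces $\mu=M_\theta(u)^{1/(p-1)}$, the asymptotic formulas (\ref{7.1})--(\ref{7.2}) for $u$ follow immediately from those of $v$ upon multiplying the leading constant $c_p$ (resp.\ $c_{p^*}$) by $\mu$.

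For part~(ii), the homogeneous case $p=2$, $\theta=0$ makes $\Psi(\mu)=(1-m)\mu$ linear, and under the normalization $M_0(v)=m=1$ the equation vanishes identically, so every $\lambda>0$ is admissible and gives $u=\lambda v$ with $M_0(u)=\lambda$. For part~(i), I would carry out an elementary calculus analysis of $\Psi$ on $(0,+\infty)$: when $p>2$, $\Psi$ is strictly convex with $\Psi(0)=0$, $\Psi(+\infty)=+\infty$ and a unique minimum at $\mu_*=(m/(p-1))^{1/(p-2)}$ of negative value $-(p-2)m\mu_*/(p-1)$; when $1<p<2$, $\Psi$ is strictly concave with $\Psi(0)=0$, $\Psi(+\infty)=-\infty$ and a unique positive maximum; when $p=2$, $\Psi$ is linear. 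In each of Cases~1--4 the intermediate value theorem, applied on the appropriate monotonicity interval of $\Psi$, produces a positive root $\mu$ of $\Psi(\mu)=\theta$ that additionally satisfies $\theta+\mu m>0$, and $u=\mu v$ is then the desired Kirchhoff solution.

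To produce the two distinct solutions $u_1,u_2$ claimed in part~(i), the plan is to apply the above scaling argument to two distinct base profiles $v^{(1)},v^{(2)}$ of~(\ref{eq 6.3.1}) sharing the same leading singular behaviour but having distinct masses $m_1\ne m_2$; these two base solutions are presumably provided by the preliminary existence theory of Section~5, relying on the Sobolev subcriticality $p<(N+2)/(N-2)$ for compactness. Once $(\mu_i,v^{(i)})$ are in hand, positivity, interior regularity away from the origin, and the vanishing boundary trace of $u_i=\mu_i v^{(i)}$ are inherited directly from $v^{(i)}$, while $M_\theta(u_i)=\mu_i^{p-1}>0$ gives the positivity of the Kirchhoff function. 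The main obstacle is therefore not the Kirchhoff reduction itself, which is a one-variable calculus exercise, but rather the construction of two distinct positive singular solutions of the semilinear base problem in the supercritical-for-singularity but Sobolev-subcritical range $p^*\le p<(N+2)/(N-2)$; it is here that the case-by-case hypotheses on $N$, $\theta$ and the quantity $m_2$ in Case~2 enter the argument.
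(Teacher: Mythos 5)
Your proposal is correct and takes essentially the same route as the paper: the paper likewise takes the two singular solutions $v_1,v_2$ of $-\Delta v=v^p$ in $\Omega\setminus\{0\}$ furnished by Pacard and Mazzeo--Pacard (Lemma \ref{lm 6.2}), rescales them, and reduces the Kirchhoff constraint to the one-variable equation $\frac{1}{\lambda^{-1/(p-1)}m_i+\theta}-\lambda=0$, which is exactly your $\Psi(\mu)=\theta$ under the substitution $\mu=\lambda^{-1/(p-1)}$, settled case by case via the intermediate value theorem. The only inessential deviation is your extra requirement $m_1\neq m_2$ for distinctness of $u_1,u_2$, which is not needed (and not assumed in the paper), since two distinct solutions of $-\Delta v=v^p$ cannot be proportional and hence their rescalings remain distinct.
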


To prove Theorem \ref{teo 2}, we need the following lemma.
\begin{lemma}(\label{lm 6.2}\cite{P1,P2})
 Let $N\ge 3$, $p\in[p^*,\frac{N+2}{N-2})$ and $\Omega$ be a bounded smooth domain containing the origin.     Then the following   problem
\begin{equation}\label{eq 6.3.1}
 -\Delta  u = u^p  \quad{\rm in}\quad  \Omega\setminus\{0\},\qquad u=0\quad{\rm on}\quad \partial \Omega
\end{equation}
has two positive singular solution $v_{1}$ and $v_2$ verifying that
\begin{equation}\label{6.1.0}
{\rm if}\ p\in(p^*,\frac{N+2}{N-2}),\quad \lim_{|x|\to0^+} v_i(x)|x|^{\frac2{p-1}}=c_p
\end{equation}
and
\begin{equation}\label{6.1.1}
{\rm if}\ p=p^*,\quad \lim_{|x|\to0^+} v_i(x)|x|^{N-2}(\ln |x|)^{\frac{N-2}2}=c_{p^*}.
\end{equation}

\end{lemma}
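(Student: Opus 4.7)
The plan, in the spirit of Pacard's construction, is to build two positive singular solutions of $-\Delta u=u^p$ on $\Om$ by correcting the explicit Emden--Fowler profile against the Dirichlet boundary condition. For $p\in(p^*,\frac{N+2}{N-2})$ the function
$$U_p(x)=c_p|x|^{-\frac{2}{p-1}},\qquad c_p=\left[\tfrac{2}{p-1}\left(N-2-\tfrac{2}{p-1}\right)\right]^{\frac{1}{p-1}},$$
solves $-\Delta U_p=U_p^p$ on $\R^N\setminus\{0\}$; at $p=p^*$ scale-invariance is lost and one instead uses $U_{p^*}(x)=c_{p^*}|x|^{-(N-2)}(-\log|x|)^{-(N-2)/2}$, which satisfies the equation near the origin up to a lower-order remainder. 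These furnish the targeted leading-order asymptotics (\ref{6.1.0})--(\ref{6.1.1}).

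For the first solution $v_1$, I would fix a smooth cutoff $\chi$ supported near the origin with $\chi\equiv 1$ on a smaller neighborhood, and form $\bar U=\chi U_p$ (resp. $\chi U_{p^*}$). One then solves the nondegenerate problems $-\Delta u_n=u_n^p$ on $\Om\setminus\overline{B_{1/n}(0)}$ with $u_n=\bar U$ on $\pa B_{1/n}(0)$ and $u_n=0$ on $\pa\Om$. The function $\bar U+h_n$, where $h_n$ is a harmonic corrector absorbing the error $-\Delta\bar U-\bar U^p$ and vanishing on $\pa\Om$, serves as a supersolution, and the monotone limit as $n\to\infty$ produces a singular solution $v_1$ on $\Om\setminus\{0\}$. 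The exact prefactor in the expansion at the origin follows from a matched-asymptotic comparison with the radial ODE $u''+\tfrac{N-1}{r}u'+u^p=0$.

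For a second, distinct singular solution $v_2$, I would exploit the structure of the singular radial solutions. After the Emden--Fowler substitution $u(r)=r^{-2/(p-1)}\om(\log r)$ the equation becomes an autonomous second-order ODE whose phase portrait, for $p^*\le p<\frac{N+2}{N-2}$, admits several bounded singular orbits sharing the common leading profile $c_p r^{-2/(p-1)}$ but differing at subleading order. Extending such an alternative radial ansatz to $\Om$ via a fixed-point argument on weighted H\"older spaces---using invertibility of the singular linearization $L=-\Delta-pU_p^{p-1}$ on weights tuned to the indicial roots of $L$---produces $v_2\not\equiv v_1$. A Leray--Schauder degree computation on a cone of admissible singular profiles provides an equivalent route.

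The main obstacle is certifying that $v_2$ is genuinely distinct from $v_1$ while sharing the same leading-order singularity. This demands fine control of the subleading terms through a matched-asymptotic expansion and a Fredholm analysis of $L$ on weighted spaces whose exponents reflect the indicial roots at the origin. The critical case $p=p^*$ is the most delicate: scale-invariance fails and one must accommodate the logarithmic factor $(-\log|x|)^{-(N-2)/2}$ appearing in (\ref{6.1.1}), forcing a logarithmically refined expansion at each step of the construction.
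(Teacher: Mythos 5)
First, note that the paper offers no proof of this lemma at all: it is quoted directly from Pacard \cite{P1} (the critical case $p=p^*$) and Mazzeo--Pacard \cite{P2} (the range $p^*<p<\frac{N+2}{N-2}$), so what you are really attempting is a reconstruction of those two papers. Your sketch does identify the correct objects --- the Emden--Fowler profile $c_p|x|^{-2/(p-1)}$, its log-corrected analogue at $p=p^*$, and the Fredholm analysis of the linearization $-\Delta-pU_p^{p-1}$ on weighted H\"older spaces governed by the indicial roots --- and that is indeed the machinery of \cite{P2}. But as a proof the proposal has concrete gaps.

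The construction you give for $v_1$ does not work as written. If $h_n$ is harmonic it cannot ``absorb'' the error $E=-\Delta\bar U-\bar U^p$; and even if you instead take $-\Delta h_n=E_+$, the function $\bar U+h_n$ is not a supersolution of the \emph{source} problem (\ref{eq 6.3.1}), because $(\bar U+h_n)^p-\bar U^p\ge p\,\bar U^{p-1}h_n$ is a new positive term of the same order as the equation itself and is not dominated by $E$; making $h_n$ beat it is a nonlinear condition that simply re-encodes the original fixed-point problem. Likewise, for a source nonlinearity the exhaustion problems on $\Omega\setminus\overline{B_{1/n}(0)}$ have no uniqueness and no monotone structure, so ``the monotone limit'' is not well defined, and nothing prevents the limit from losing the singularity. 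This is precisely why \cite{P1,P2} do not argue by barriers but by a genuine perturbation/fixed-point scheme in weighted spaces. Second, the heart of the lemma is the multiplicity, and you explicitly leave it as ``the main obstacle.'' Your heuristic that the Emden--Fowler phase portrait contains several bounded singular orbits with the same leading constant is not a proof and is not how \cite{P2} obtains distinct solutions: there the multiplicity comes from the finite-dimensional parameter (deficiency/dilation) family in the nonlinear construction, through which one produces a continuum of solutions sharing the asymptotics (\ref{6.1.0}); distinctness is read off from that parametrization, not from subleading matched asymptotics. Third, the case $p=p^*$, where $c_p$ degenerates and the profile carries the factor $(-\log|x|)^{-(N-2)/2}$ of (\ref{6.1.1}), requires Pacard's separate logarithmically weighted analysis and is only gestured at. In short, the strategy points at the right literature, but none of the three essential steps (existence, multiplicity, the critical case) is actually carried out or carries out correctly.
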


\noindent{\bf Proof of Theorem \ref{teo 2}.}
From Lemma \ref{lm 6.2}, it is known that for  $  p^*\le p<2^*-1$,    problem (\ref{eq 6.3.1}) has
 two positive solutions $v_{i}$ verifying that (\ref{6.1.0}) and (\ref{6.1.1}).

We observe that
$$v_{\lambda,i} =\lambda^{-\frac1{p-1}} v_i$$
is a solution  of
\begin{equation}\label{eq 3.3.4}
 -\Delta  u +\lambda u^p=0 \quad{\rm in}\quad  \Omega\setminus\{0\},\qquad u=0\quad{\rm on}\quad \partial \Omega.
\end{equation}

 For $p^*\le p<2^*-1$,   we have that
$\int_\Omega|\nabla u_p| dx<+\infty$, then
$$M_\theta(v_{\lambda,i})=\lambda^{-\frac1{p-1}}m_i +\theta>0 $$
 for $\lambda\in (0,\lambda_+)$, where $m_i=\int_{\Omega}|\nabla v_i| dx$ and
$$\lambda_+ = \arraycolsep=1pt\left\{
\begin{array}{lll}
 \displaystyle   +\infty  \quad
 &{\rm if}\quad \theta\ge0,\\[2mm]
 \phantom{     }
 \displaystyle   (- m_i/\theta)^{p-1}   \quad
 &{\rm if}\quad \theta<0.
\end{array}\right.
 $$

Denote
$$F_\theta(\lambda)=\frac1{\lambda^{-\frac1{p-1}}m_i +\theta}-\lambda,\quad \lambda\in(0,\lambda_+),$$
which is continuous and
\begin{eqnarray*}
\lim_{\lambda\to \lambda_+ } F_\theta(\lambda) = \arraycolsep=1pt\left\{
\begin{array}{lll}
 \displaystyle   -\infty  \quad
 &{\rm if}\quad \theta>0,\\[2mm]
 \phantom{     }
 \displaystyle   +\infty    \quad
 &{\rm if}\quad \theta<0.
\end{array}\right.
\end{eqnarray*}

Case 1: $p>2$,  $p\ge p^*$ and $\theta>0$,
then there exists $t>0$ such that $$F_\theta(\lambda)>0. $$

Case 2: $p=2\ge  p^*$, $\theta>0$ and $m_i<1$,
then  there exists $t>0$ such that $$F_\theta(\lambda)>0. $$

Case 3: $p^*\le p<2$ and $\theta<0$,
then  there exists $t>0$ such that $$F_\theta(\lambda)<0. $$

In the above three cases, there exists a unique $\bar\lambda_i$ such that
$$\frac1{\bar\lambda_i^{-\frac1{p-1}}m_i +\theta}=\bar\lambda_i,$$
that is,  $M_\theta^{-1}(v_{\bar\lambda_i,i})=\bar \lambda_i$.    Therefore,
(\ref{eq 1.1}) has a solution $u_i:=v_{\bar\lambda_i,i}$ with
$ M_\theta(u_{i})>0$.

When $\theta=0$,
$$F_0(\lambda)= \lambda^{\frac1{p-1}}m_i -\lambda,\quad \lambda\in(0,+\infty),$$
When $N\ge 4$, we have that $1/(p-1)>1$ for $p^*\le p<2^*-1$, or when $N=3$, $p^*\le p<2^*-1$, $p\not=2$,
 $\bar\lambda_i=m_i^{(p-1)/(p-2)}$,
 then
(\ref{eq 1.1}) has a solution $u_i:=v_{\bar\lambda_i,i}$.

 When $N=4,5$ and $p=2\in[p^*,2^*-1)$, if $m_i=1$, then for any $\lambda>0$, $u:=\lambda^{-\frac1{p-1}} v_i$
is a solution (\ref{eq 1.1}) with $M_\theta( u)=\lambda>0$ and verifying (\ref{7.1})--(\ref{7.2}).\hfill$\Box$

\begin{remark}\label{re 2}
{\rm Our method to prove Theorem \ref{teo 2} is based on the homogeneous property of the nonlinearity. When the nonlinearity is not
a power function, this scaling method fails and it is challenging to provide the existence results of isolated singular solutions.}

\end{remark}

\section*{References}


\begin{thebibliography}{99}

\bibitem {BB11} Ph. B\'{e}nilan, H. Br\'{e}zis, Nonlinear problems related to the
Thomas-Fermi equation, {\it J. Evolution Eq. 3}, 673--770 (2003).



\bibitem {BV} M. Bidautv\'{e}ron, L. Vivier, An elliptic semilinear equation with source term involving boundary measures: the subcritical case,
 {\it Revista Matematica Iberoamericana 16}, 477--513 (2000).

\bibitem {B12} H. Br\'{e}zis, Some variational problems of the Thomas-Fermi type.
Variational inequalities and complementarity problems, {\it Proc.
Internat. School, Erice, Wiley, Chichester}  53--73 (1980).

\bibitem {BL} H. Br\'{e}zis, P. Lions,  A note on isolated singularities for linear elliptic equations, in Mathematical Analysis and Applications,
{\it Acad. Press  }  263--266 (1981).



 \bibitem {Hung} M. F. Bidaut-V\'{e}ron,  N. Hung,
L. V\'{e}ron, Quasilinear Lane-Emden equations with absorption and
measure data, {\it J. Math. Pures Appl. 102},  315--337 (2014).

\bibitem {CZ}   H.  Chen, F. Zhou, Classification of isolated singularities of positive solutions for Choquard equations,
{\it   J.  Diff. Eq. 261,  } 6668--6698 (2016).
\bibitem {CFY}   H.  Chen,  P, Felmer and J. Yang,
Weak solutions of semilinear elliptic equation involving Dirac mass, {\it arXiv: 1509.05839} (2015).


 \bibitem {DAS} P. D'Ancona, S. Spagnolo, Global solvability for the
degenerate Kirchhoff equation with real analytic
data, {\it Invent. Math. 108,} 247--262 (1992).

 \bibitem {DS} P. Dancona, S. Spagnolo, Nonlinear perturbations of the kirchhoff equation,  {\it Comm. Pure   Appl. Math., 47(7),} 1005--1029 (1994).

 \bibitem {DP} Y. Deng, S. Peng, W. Shuai, Existence and asymptotic behavior of nodal solutions for the
Kirchhoff-type problems in $\R^3$, {\it J. Funct. Anal. 269}, 3500--3527 (2015).


\bibitem{FV} A. Fiscella, E. Valdinoci, A critical Kirchhoff type problem involving a nonlocal
operator, {\it Nonlinear Anal. 94,} 156--170 (2014).

 \bibitem {GNN}   B. Gidas, W. Ni, L. Nirenberg, Symmetry and related properties via the
maximum principle, {\it Comm. Math. Phys. 68}, 209--243 (1979).

\bibitem {GV} A. Gmira, L. V\'{e}ron, Boundary singularities of solutions of some
nonlinear elliptic equations, {\it Duke Math. J.  64}, 271--324 (1991).

 \bibitem {HL} Y. He, G. Li, Standing waves for a class of Kirchhoff type problems in $\R^3$ involving critical
Sobolev exponents,
{\it Calc. Var. Partial Differential Equations 54}, 3067--3106 (2015).

\bibitem{K} G. Kirchhoff, Vorlesungen \"{u}ber Mechanik, {\it Tauber, Leipzig} (1883).


\bibitem{P} G. Li,  P. Luo, S. Peng, C. Wang, C. Xiang,
 Uniqueness and Nondegeneracy of positive solutions to Kirchhoff equations and its applications in singular perturbation problems, {\it arXiv: 1703.05459} (2017).

 \bibitem{Lions1980}
P. Lions, Isolated singularities in semilinear problems,  \emph{J. Diff.   Eq.  38,}   441--450 (1980).


\bibitem {LR}
I. Liu, M. Rincon,   Effect of moving boundaries on the vibrating elastic string,
{\it Applied Numerical Mathematics 47,} 159--172 (2003).

\bibitem {LPWX} P. Luo, S. Peng, C. Wang, C. Xiang, Multi-peak positive solutions to a class of Kirchhoff equations,
{\it arXiv:1708.01770 }.

\bibitem {MV1} M. Marcus, L. V\'{e}ron, The boundary trace of positive solutions of
semilinear elliptic equations: the subcritical case, {\it Arch. Rat.
Mech. Anal. 144},  201--231 (1998).


\bibitem {MV2} M. Marcus, L. V\'{e}ron, The boundary trace of positive
solutions of semilinear elliptic equations: the supercritical case,
{\it J. Math. Pures Appl. 77}, 481--524 (1998).

\bibitem {MV4} M. Marcus, L. V\'{e}ron, The boundary trace and generalized B.V.P. for
semilinear elliptic equations with coercive absorption, {\it Comm.
Pure Appl. Math. 56}, 689--731 (2003).

\bibitem {MP} R. Mazzeo, F. Pacard,
A construction of singular solutions for a semilinear elliptic equation using asymptotic analysis,
{\it J. Diff. Geom. 44},  331--370 (1996).

\bibitem {M}  L.A. Medeiros,
 On some nonlinear perturbation of Kirchhoff-Carrier operator,
 {\it Comp. Appl. Math. 13}, 225--233 (1994).
 


\bibitem{MRS} G. Molica Bisci, V. R\u adulescu, R. Servadei, {\it Variational methods for nonlocal
fractional problems}, Cambridge University Press, Cambridge, 2016.

\bibitem{MS} G. Molica Bisci, V. R\u adulescu, Ground state solutions of scalar field fractional for Schr\"{o}dinger equations,
{\it Calc. Var. Partial Differential Equations 54,} 2985--3008 (2015).


 \bibitem {NS}
Y. Naito, T. Sato,  Positive solutions for semilinear elliptic equations with singular forcing terms,
{\it J. Diff. Equ. 235(2)}, 439--483 (2007).

\bibitem{PZ} K. Perera, Z. Zhang, Nontrivial solutions of Kirchhoff-type problems via the Yang index. {\it J.
Diff. Eq. 221}, 246--255 (2006).


\bibitem {P1} F. Pacard, Existence and convergence of positive weak solutions of $-\Delta u=u^{\frac{N}{N-2}}$ in bounded
domains of $\R^N$, {\it Calc. Var. Partial Differential Equations 1,} 243--265 (1993).

\bibitem {P2} R. Mazzeo,  F. Pacard,
A construction of singular solutions for a semilinear
elliptic equation using asymptotic analysis, {\it J. Diff. Geom.}  44, 331--370 (1996).

\bibitem {PXZ}
P. Pucci, M. Xiang, B. Zhang,  Multiple solutions for nonhomogeneous Schr\"{o}dinger-Kirchhoff type equations involving the fractional $p$--Laplacian in ${\mathbb {R}}^N$,
{\it Calc. Var. Partial Differential Equations 54,} 2785--2806 (2015).

\bibitem{PXZ2} P. Pucci, M. Xiang, B. Zhang, Existence and multiplicity of entire solutions for fractional $p$--Kirchhoff equations,
{\it Adv. Nonlinear Anal. 5,} 27--55 (2016).

 \bibitem {S}  J. Serrin, A symmetry problem in potential theory, {\it Arch. Ration. Mech. Anal.
43}, 304--318 (1971).

 \bibitem {St} E. M. Stein, Singular Integrals and differentiability properties of functions,
 {\it Princeton University Press,} (1970).

\bibitem {VG}  C. Vargasjarillo, G. Gonzalezsantos,  A numerical study of discrete nonlinear elastic strings in two dimensions,
 {\it Computers $\&$ Chemical Engineering,}  400--405 (2010).

\bibitem {V}  L. V\'{e}ron, Elliptic equations involving measures,
 stationary partial differential equations,
{\it Vol. I, 593--712, Handb. Differ. Eq., North-Holland, Amsterdam}
(2004).


\bibitem {V00}  L. V\'{e}ron, Singularities of solutions of second-order quasilinear equations,
{\it Chapman and Hall CRC} (1996).


\bibitem{XZZ} M. Xiang, B. Zhang, X. Zhang, A nonhomogeneous fractional $p$--Kirchhoff type problem  involving critical exponent in $\mathbb{R}^N$, {\it Adv. Nonlinear Stud. 17,} 611--640 (2017).


\end{thebibliography}
\end{document}